\theoremstyle{plain}
\newtheorem{thm}{Theorem}[section]
\newtheorem{prop}[thm]{Proposition}
\newtheorem{clly}[thm]{Corollary}
\newtheorem{lemma}[thm]{Lemma}
\newtheorem{defi}[thm]{Definition}
\newtheorem{ex}[thm]{Example}
\newtheorem{maintheorem}{Theorem}
\newcommand{\spec}{\operatorname{Spc}}
\newcommand{\cl}{\operatorname{Cl}}
\newcommand{\diff}{\operatorname{Diff}}
\newcommand{\per}{\operatorname{Per}}
\newcommand{\sink}{\operatorname{Sink}}
\newcommand{\sou}{\operatorname{Source}}
\newcommand{\sad}{\operatorname{Saddle}}
\newcommand{\tr}{\operatorname{Tr}}
\title[Characterizing finite sets of nonwandering points]
      {Characterizing finite sets of nonwandering points}
\author[C. A. Morales]{C. A. Morales}
\address{Instituto de Mat\'ematica, UFRJ,
P. O. Box 68530, 21945-970 Rio de Janeiro, Brazil.}
\subjclass[2000]{Primary 37D20, Secondary 37E30}
 \keywords{Nonwandering Point, Diffeomorphism, Finite Set.}
\email{morales@impa.br}
\thanks{Partially supported by CNPq, FAPERJ and Pronex Dyn. Systems. from Brazil.}
\begin{document}




\begin{abstract}
We characterize finite sets $S$ of nonwandering points
for generic diffeomorphisms $f$ as those which are
{\em uniformly bounded}, i.e.,
there is an uniform bound for small perturbations of the derivative of $f$ along the points in $S$
up to suitable iterates. We use this result
to give a $C^1$ generic characterization of the Morse-Smale diffeomorphisms related to
the weak Palis conjecture \cite{c}.
Furthermore, we obtain another proof of the result by Liao and Pliss
about the finiteness of sinks and sources for star diffeomorphisms \cite{l}, \cite{Pl}.
\end{abstract}

\maketitle

\section{Introduction}

\noindent
The study of subsets of nonwandering points for smooth diffeomorphisms is an interesting topic
in the hyperbolic theory of dynamical systems.
To support this assertion we can mention the {\em Smale's conjecture} that the
Axiom A diffeomorphisms constitute an open and dense set
in the space of $C^1$ diffeomorphisms of any closed surface (\cite{sm} p. 779).
Indeed, the {\em Ma\~n\'e's dichotomy} \cite{M} reduces it to prove
that every $C^1$ generic surface diffeomorphism has {\em finitely many sinks and sources}
(actually it suffices to rule out the existence of infinitely many periodic points with nonreal eigenvalues
for $C^1$ generic surface diffeomorphisms \cite{m}).
These works trigger the search for necessary and sufficient conditions for the finiteness
of a given subset of nonwandering points for diffeomorphisms $f$ on closed manifolds
and, in this paper, we focus on those $f$
exhibiting finitely many periodic points of period $n$, $\forall n\in\mathbb{N}^+$.
The condition we are interested in relies on
the definition of {\em uniformly bounded set} $S$ with respect to some map
$n: S\to \mathbb{N}^+$
which, roughly speaking, means that
there is an uniform bound for small perturbations of the derivative of $f$ along points $p\in S$
up to $n(p)$-iterates.
We shall prove that a set $S$ of nonwandering points is finite as soon as it is uniformly bounded
with respect to some $n: S\to \mathbb{N}^+$
with {\em finite preimages} (i.e. $n^{-1}(k)$
is finite for all $k\in\mathbb{N}^+$).
As an application of this result
we will obtain a $C^1$ generic characterization of the Morse-Smale diffeomorphisms related to
the {\em weak Palis's conjecture} \cite{c}. Afterward
we obtain another proof of the result, by Liao and Pliss \cite{l}, \cite{Pl}, that
all star diffeomorphisms on closed manifolds have finitely many sinks and sources.
Let us state our results in a precise way.

Hereafter $M$ is a {\em closed manifold}, i.e., a compact connected boundaryless manifold of dimension $\dim(M)\geq 2$.
Denote by $\|\cdot\|$ some Riemannian metric on $M$.
The space of $C^1$ diffeomorphisms equipped with the standard
$C^1$ topology is denoted by $\diff^1(M)$.
It turns out that $\diff^1(M)$ is a Baire space, and so,
residual (i.e. countable intersection of open and dense) subsets are
dense. We say that a certain property holds for $C^1$ generic diffeomorphisms
if it does in a residual subset of $\diff^1(M)$.
Given $f\in \diff^1(M)$ we say that $p\in M$ is a
{\em wandering point} if $U\cap\left(\cup_{n\in\mathbb{N}^+}f^n(U)\right)=\emptyset$ for some neighborhood
$U$ of $p$. Otherwise we call it {\em nonwandering point}.
The set of nonwandering points is the {\em nonwandering set} denoted by
$\Omega(f)$.
A point $p$ is {\em periodic} if there is a positive integer
$n$ such that $f^n(p)=p$.
The minimal of such integers is the period of $p$ denoted by
$n_p$ (or $n_{p,f}$ to emphasize $f$). We denote by $\per(f)$ the set of periodic points.
Clearly $\per(f)\subset \Omega(f)$ although the inclusion may be proper.
The eigenvalues of a periodic point $p$ are those of the linear mapping
$Df^{n_p}(p):T_pM\to T_pM$. We say that $p$ is  {\em hyperbolic} if its eigenvalues have modulus different from $1$,
a {\em sink} if all its eigenvalues have modulus less than $1$,
a {\em source} if it is a sink for $f^{-1}$ and a {\em saddle} if it is neither a sink nor a source.
By the invariant manifold theory \cite{hps} every hyperbolic periodic point $p$ is equipped with
a pair of invariant manifolds, the stable and unstable manifolds, tangent
at $p$ to the eigenspace associated to the eigenvalues of modulus less than and bigger than $1$ respectively.
We say that $f$ is {\em Morse-Smale} if $\Omega(f)$
consists of finitely many hyperbolic periodic points all of whose invariant manifolds are in general position.
The product of linear mappings $A,B$ will be denoted by $AB$
(or $\prod_{i=1}^nA_i$ when finitely many maps $A_1,\cdots, A_n$).
Given two sets $O$ and $Q$ we say that a map $n: O\to Q$ has
{\em finite preimages} if $n^{-1}(k)$ is finite for every $k\in Q$.
The following is the main definition of this work.

\begin{defi}
\label{def1}
A (nonnecessarily invariant) set $S\subset M$ is
{\em uniformly bounded with respect to a map $n: S\to \mathbb{N}^+$} if
there are positive constants $\epsilon,K$ such that
\begin{equation}
\label{uniformly}
\left\|\displaystyle\prod_{i=0}^{n(p)-1}L_i\right\|\leq K,
\end{equation}
for all $p\in S$ and all sequence
of linear isomorphisms $L_i: T_{f^i(p)}M\to T_{f^{i+1}(p)}M$ with
$\|L_i-Df(f^i(p))\|\leq \epsilon$ for $0\leq i\leq n(p)-1$.
\end{defi}

Related to this definition we obtain the following example.

\begin{ex}
Every set $S$ is uniformly bounded with respect to
any {\em bounded} map $n:S\to \mathbb{N}^+$.
\end{ex}

\begin{proof}
Indeed, we obtain (\ref{uniformly})
by taking any upper bound $n_0$ of $n$,
any $\epsilon>0$ and $K=(\|f\|+\epsilon)^{n_0}$
where
$
\|f\|=\sup_{p\in M}\|Df(p)\|.
$
\end{proof}

This example suggests that extra hypotheses on
$n: S\to \mathbb{N}^+$ are needed in order to obtain interesting results.
The one we shall consider here is that of having {\em finite preimages},
i.e., $n^{-1}(k)$ is finite for every $k\in \mathbb{N}^+$.
At first glance we can check easily that a bounded map $n: S\to \mathbb{N}^+$ has
finite preimages if and only if $S$ is finite.
From this it follows that every finite set $S$ is uniformly bounded with respect to some
map $n: S\to \mathbb{N}^+$ with finite primages.
This elementary observation makes us to ask if, conversely,
every set $S$ which is uniformly bounded with respect to some
map with finite preimages $n:S\to \mathbb{N}^+$ is finite.
Nevertheless, the answer is negative by the following counterexample.

\begin{ex}
If $f\in \diff^1(M)$ exhibits a sink $p$ of period $1$
for which $\|Df(p)\|<1$, then $f$ also exhibits an infinite set $S$
which is uniformly bounded with respect to some map with finite preimages $n: S\to \mathbb{N}^+$.
\end{ex}

\begin{proof}
Take $\delta>0$ with $\|Df(p)\|+\delta<1$ and a neighborhood $U$ of $p$ such that
$\|Df(x)\|\leq \frac{\delta}{2}+\|Df(p)\|$ for all $x\in U$.
Since $p$ has period $1$ there is a neighborhood $W\subset U$ of $p$ such that
$f^k(x)\in U$ for all $x\in W$.
We shall prove that any infinite sequence $S=\{x_k:k\in\mathbb{N}^+\}$ in $W$ is uniformly bounded
with respect to $n: S\to \mathbb{N}^+$, $n(x_k)=k$
(clearly $n^{-1}(k)=\{x_k\}$ for all $k$ so $n$ has finite preimages).
Define $\epsilon=\frac{\delta}{2}$, fix $k\in \mathbb{N}^+$ and let
$L_i: T_{f^i(x_k)}M\to T_{f^{i+1}(x_k)}M$ be any sequence of linear isomorphisms satisfying
$\|L_i-Df(f^i(x_k))\|\leq \epsilon$ for all $0\leq i\leq k-1$.
Then,
$
\|L_i\|\leq\epsilon+\|Df(f^i(x_k))\|<\delta+\|Df(p)\|
$
so
$$
\left\|\displaystyle\prod_{i=0}^{k-1}L_i\right\|
\leq \displaystyle\prod_{i=0}^{k-1}\|L_i\|
\leq(\delta+\|Df(p)\|)^k<1
$$
and we are done.
\end{proof}

Now observe that, in this counterexample, every point in $S$ is
wandering.
This observation motivates our main result below.

\begin{maintheorem}
 \label{thA}
Every set $S$ of nonwandering points which is uniformly bounded
with respect to some map with finite preimages $n:S\to \mathbb{N}^+$ is finite.
\end{maintheorem}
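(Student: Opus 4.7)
The plan is to argue by contradiction. Suppose $S$ is infinite; because $n\colon S\to\mathbb{N}^+$ has finite preimages its image is unbounded, so I can select $p_k\in S$ with $n_k:=n(p_k)$ strictly increasing, and passing to a subsequence (using compactness of $M$) I may assume $p_k\to p_*\in M$. My first step is to convert the uniform-boundedness hypothesis into exponential contraction of $Df^{n_k}$ at $p_k$. I would apply (\ref{uniformly}) to the scaling perturbations
\[
L_i:=\bigl(1+\epsilon/\|Df(f^i(p_k))\|\bigr)\,Df(f^i(p_k)),\qquad 0\le i\le n_k-1,
\]
which satisfy $\|L_i-Df(f^i(p_k))\|=\epsilon$ exactly and whose product equals $\prod_{i=0}^{n_k-1}(1+\epsilon/\|Df(f^i(p_k))\|)$ times $Df^{n_k}(p_k)$. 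Writing $\|f\|:=\sup_{x\in M}\|Df(x)\|$, the scalar factor is at least $(1+\epsilon/\|f\|)^{n_k}$, so $\|\prod_i L_i\|\le K$ forces
\[
\|Df^{n_k}(p_k)\|\ \le\ K\lambda^{n_k},\qquad \lambda:=(1+\epsilon/\|f\|)^{-1}<1,
\]
i.e.\ $\|Df^{n_k}(p_k)\|\to 0$ exponentially fast.

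Next I would exploit that each $p_k\in\Omega(f)$. By combining Pugh's $C^1$ closing lemma with Franks' lemma, I would produce for each $k$ a diffeomorphism $g_k$ arbitrarily $C^1$-close to $f$ and a periodic point $q_k$ of $g_k$ of period exactly $n_k$, whose orbit $q_k,g_k(q_k),\ldots,g_k^{n_k-1}(q_k)$ shadows the segment $p_k,f(p_k),\ldots,f^{n_k-1}(p_k)$ and along which $Dg_k$ lies within $\epsilon/2$ of $Df$ at the corresponding points of this segment. The derivatives $Dg_k$ along the orbit of $q_k$ are then admissible perturbations in Definition~\ref{def1} for the point $p_k$, so their product has norm at most $K$; rerunning the scaling argument now along the orbit of $q_k$ gives $\|Dg_k^{n_k}(q_k)\|\le K\mu^{n_k}$ for some $\mu<1$, which is strictly less than $1$ once $k$ is large enough. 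Hence each $q_k$ is a periodic sink of the nearby $g_k$, and $q_k\to p_*$.

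The contradiction would then be drawn from the accumulation of these sinks: a limit/continuity argument using $g_k\to f$ in $C^1$ forces $p_*$ itself to be a sink of $f$; but then some neighborhood of $p_*$ lies in its basin of attraction, and every nonwandering point of $f$ in this basin must coincide with $p_*$, contradicting the existence of infinitely many distinct $p_k\in\Omega(f)$ accumulating at $p_*$. The hard part of the proof will be the second step: arranging the closing perturbation so that the period of $q_k$ is \emph{exactly} $n_k$ and the orbit of $q_k$ tracks the prescribed length-$n_k$ segment closely enough for the uniform-boundedness bound to transfer to the $Dg_k$-derivatives along $q_k$'s orbit. This delicate coupling of Pugh's and Franks' lemmas is the technical heart of the argument; the scaling estimate in the first step and the compactness/continuity argument in the last step are essentially routine once it is in place.
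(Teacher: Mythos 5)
Your first step is a nice and correct observation: the scaling perturbation $L_i=(1+\epsilon/\|Df(f^i(p_k))\|)\,Df(f^i(p_k))$ is admissible in Definition~\ref{def1} and does yield $\|Df^{n_k}(p_k)\|\le K(1+\epsilon/\|f\|)^{-n_k}$. But the argument then goes off the rails, for two related reasons.

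First and fatally, the $C^1$ closing lemma does \emph{not} allow you to prescribe the period of the closed orbit, and there is no reason whatsoever why $f^{n_k}(p_k)$ should be close to $p_k$. The integer $n_k=n(p_k)$ is supplied by an arbitrary finite-to-one map; it is completely unrelated to any return time of the nonwandering orbit of $p_k$. So you cannot produce, for a small perturbation $g_k$, a periodic point $q_k$ of period \emph{exactly} $n_k$ shadowing the segment $p_k,\dots,f^{n_k-1}(p_k)$. You flag this coupling of Pugh and Franks as ``the technical heart,'' but it is not merely delicate --- it is false as stated. Even the weaker idea of closing at some uncontrolled period $\tilde n_k$ does not help, because your contraction bound from Definition~\ref{def1} is available only at the prescribed time $n_k$, not at $\tilde n_k$.

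Second, even granting a way to produce periodic sinks $q_k\to p_*$ of period $n_k\to\infty$, the final ``limit/continuity'' step is a genuine gap: periodic sinks of unbounded period for maps $g_k\to f$ need not converge to a periodic point of $f$, and $p_*$ need not a priori be a sink. Proving that $p_*$ is a periodic sink of $f$ is exactly the nontrivial content of the paper's Proposition~\ref{sink}, which achieves it without any closing lemma by combining a local exponential contraction estimate (Lemma~\ref{blabla}) with the nonwandering property to manufacture a fixed point by the contraction mapping principle and then force $p_*$ onto its orbit.

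The paper's route is structurally different and worth internalizing. From uniform boundedness it does not merely extract a bound on $\|Df^{n_k}(p_k)\|$ (the norm of the product) but, via Ma\~n\'e's perturbation (Lemma~\ref{l1}), a bound on the \emph{product of block norms} $\prod_{j}\|Df^{m_0}(f^{m_0 j}(p_k))\|\le K_1\lambda^{[n_k/m_0]}$, i.e.\ the (MP)-contracting property of Corollary~\ref{laprueva1}. This stronger block bound is what Pliss's lemma (Lemma~\ref{pliss}) needs: it produces Pliss times $n^k_1<\cdots<n^k_{l_k}$ with $l_k\ge c([n_k/m_0]-1)$, so that $n_k/m_0-n^k_1\to\infty$, and from each $n^k_1$ there is forward block-contraction up to time $n_k$. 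Passing to a limit of the points $x_k=f^{m_0 n^k_1}(p_k)\in\Omega(f)$ then yields a nonwandering point with forward contraction for \emph{all} times, and Proposition~\ref{sink} converts that into a sink, which is isolated in $\Omega(f)$ --- the contradiction. Your scaling estimate is too weak to feed Pliss's lemma (it bounds $\|Df^{n_k}(p_k)\|$, not $\prod_j\|Df^{m_0}(\cdot)\|$), and your reliance on the closing lemma replaces the one step where the proof genuinely has to work (Proposition~\ref{sink}) by an appeal to a tool that cannot deliver what you ask of it.
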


The proof is based on methods introduced by
Ma\~n\'e \cite{M} and Pliss \cite{Pl}.

In the sequel we present some applications of this theorem.
The first one is the following characterization.

\begin{clly}
 \label{cofin}
If $f\in\diff^1(M)$, then $S\subset\Omega(f)$ is finite if and only if
there are a neighborhood $\mathcal{U}$ of $f$ and a map with finite preimages $n: S\to\mathbb{N}^+$ such that
$$
\sup_{(h,p)\in \mathcal{U}\times S}\|Dh^{n(p)}(p)\|<\infty.
$$
\end{clly}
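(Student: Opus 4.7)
The plan is to derive Corollary~\ref{cofin} from Theorem~\ref{thA} using Franks' $C^1$-perturbation lemma as the bridge between the diffeomorphism-level supremum condition and the linear-perturbation condition of Definition~\ref{def1}.

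For the forward implication, if $S$ is finite I would simply take $n\equiv 1$, so that $n^{-1}(1)=S$ is the only non-empty preimage and is automatically finite. Choosing $\mathcal{U}=\{h\in\diff^1(M):\|h-f\|_{C^1}<1\}$, every $h\in\mathcal{U}$ satisfies $\|Dh^{n(p)}(p)\|=\|Dh(p)\|\le\|f\|+1$ uniformly on $\mathcal{U}\times S$, which is the required finite supremum.

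For the converse, I would assume $K:=\sup_{(h,p)\in\mathcal{U}\times S}\|Dh^{n(p)}(p)\|<\infty$ with $n$ having finite preimages, and aim to verify that $S$ is uniformly bounded in the sense of Definition~\ref{def1}; Theorem~\ref{thA} would then close the argument. Franks' Lemma applied to $\mathcal{U}$ furnishes $\epsilon>0$ such that, given any finite set $E\subset M$ of pairwise distinct points and any family of linear maps $L_x:T_xM\to T_{f(x)}M$ with $\|L_x-Df(x)\|\le\epsilon$, some $h\in\mathcal{U}$ realizes $Dh(x)=L_x$ for $x\in E$. For each $p\in S$ whose orbit segment $\{p,f(p),\dots,f^{n(p)-1}(p)\}$ consists of distinct points, I would apply this to the segment with the prescribed perturbations $L_i$; the resulting $h\in\mathcal{U}$ satisfies $\prod_{i=0}^{n(p)-1}L_i=Dh^{n(p)}(p)$, whose norm is controlled by $K$. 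This yields the uniform bound required by Definition~\ref{def1}.

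The main obstacle I anticipate is the periodic case: if $p\in S$ is periodic of period $\tau<n(p)$, the orbit segment revisits points and a single diffeomorphism cannot simultaneously impose independent $L_i$'s at repeated instants. I would handle this by splitting $S=(S\setminus\per(f))\sqcup(S\cap\per(f))$, applying Franks' Lemma directly to the first piece as above, and treating the periodic piece by restricting to $\tau$-periodic sequences of perturbations (which are realizable via Franks) together with the finite-preimage condition on $n$; this is precisely the kind of input the Ma\~n\'e--Pliss-type argument behind Theorem~\ref{thA} exploits, so it should suffice to conclude that $S\cap\per(f)$ is finite as well.
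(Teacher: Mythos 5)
Your argument takes the same route the paper does: for the forward implication, choose the constant (hence bounded) map $n\equiv 1$; for the converse, use Franks's Lemma to turn the supremum hypothesis into uniform boundedness in the sense of Definition~\ref{def1} (this is isolated by the paper as Lemma~\ref{proof-cofin}), and then apply Theorem~\ref{thA}. The subtlety you raise about repeated orbit points is real and, notably, the paper's own proof of Lemma~\ref{proof-cofin} does not address it: the segment $x_i=f^i(p)$, $0\le i\le n(p)-1$, is fed into Franks's Lemma with no check that these points are pairwise distinct, and if $p$ is periodic with $n_p<n(p)$ the requirement $Dg(x_i)=L_i$ cannot be met when $x_i=x_j$ but $L_i\ne L_j$. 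So you have spotted a genuine subtlety that the paper glosses over.

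Your fix for the periodic piece, however, is not yet a proof. Restricting to $n_p$-periodic families of perturbations only shows that the product $A=L_{n_p-1}\cdots L_0$ satisfies a bound of the form $\|B\,A^{q}\|\le K$ with $q\approx n(p)/n_p$ and $B$ a short product, which is strictly weaker than the requirement in Definition~\ref{def1} that $\|\prod_{i=0}^{n(p)-1} L_i\|\le K$ for \emph{arbitrary} (possibly mutually inconsistent) $L_i$; you therefore cannot simply invoke Theorem~\ref{thA} on $S\cap\per(f)$ with $n$ restricted there. Closing the gap would require additional work, for instance extracting from the power bound a bound on $\spec(A)$, upgrading that to a norm bound via the mechanism of Lemma~\ref{c1}, and then a separate case analysis depending on whether the periods $n_p$ (so that the period map on $S\cap\per(f)$ has finite preimages along a subsequence) or the ratios $n(p)/n_p$ tend to infinity. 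As written, the periodic case remains a sketch, not a proof.
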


The second application is needed to justify the supremum in Corollary \ref{M-S}.
For every $h\in \diff^1(M)$ we denote by
$\per^*(h)$ the set of periodic points of $h$ which are not sinks,
i.e., with at least one eigenvalue of modulus bigger than or equal to $1$.

\begin{clly}
 \label{necessary1}
$\{f\in \diff^1(M):\per^*(f)\neq\emptyset\}$ is open and dense in $\diff^1(M)$.
\end{clly}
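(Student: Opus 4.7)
The plan is to establish density and openness of $A:=\{f\in\diff^1(M):\per^*(f)\neq\emptyset\}$ in $\diff^1(M)$ separately.

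For density, I would show that $A$ contains a residual subset. Intersecting the standard Kupka--Smale residual and Pugh's closing-lemma residual, one may restrict to $f$ whose periodic points are all hyperbolic and for which $\Omega(f)=\overline{\per(f)}$. Arguing by contradiction, suppose $\per^*(f)=\emptyset$; then every periodic point is a hyperbolic sink, hence isolated, so $\per(f)=\sink(f)$ is a discrete closed subset of the compact manifold $M$ and therefore finite. In particular $\Omega(f)=\sink(f)$ is finite, and its pairwise disjoint open stable basins $W^s(p)$ cover $M$ (since every forward orbit has $\omega$-limit inside $\Omega(f)$). Connectedness of $M$ then collapses these basins into a single $W^s(p_0)=M$. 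Applying the same reasoning to $f^{-1}$, every backward orbit would have $\omega$-limit contained in $\Omega(f^{-1})=\{p_0\}$, which contradicts the local expansion of $f^{-1}$ near the sink $p_0$. Hence $\per^*(f)\neq\emptyset$ on this residual set, and $A$ is dense.

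For openness I would rely on the persistence of hyperbolic periodic points. If $f\in A$ carries a hyperbolic non-sink periodic point (a source or a saddle), its $C^1$-continuation has continuously varying spectrum and preserves the eigenvalue of modulus strictly greater than $1$, so an entire $C^1$-neighborhood of $f$ lies in $A$. For the remaining case where every $p\in\per^*(f)$ is non-hyperbolic---so the maximal eigenvalue modulus at $p$ equals $1$---I would apply a Franks-type local perturbation supported near the orbit of $p$: an arbitrarily $C^1$-small modification of $Df^{n_p}(p)$ can push one eigenvalue strictly outside the unit circle while ensuring $1\notin \spec(Df^{n_p}(p))$. The resulting $g$ then falls into the previous case and lies in the interior of $A$; a careful tracking of the perturbation size promotes this to $f$ itself belonging to the interior of $A$.

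The principal obstacle will be the openness step when every element of $\per^*(f)$ is non-hyperbolic: a careless small perturbation could push all eigenvalues of modulus $1$ strictly inside the unit circle, turning every non-sink into a sink and thereby leaving $A$. Ruling this out requires a bifurcation analysis showing that the perturbations producing hyperbolic non-sinks are dense in every $C^1$-neighborhood of $f$, and combining this with the density argument above gives that \emph{the interior} of $A$ is open and dense in $\diff^1(M)$, which together with $\interior(A)\subset A$ yields the corollary.
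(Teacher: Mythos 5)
Your density argument breaks down at the claim that $\per(f)=\sink(f)$, being discrete, is ``a closed subset of $M$ and therefore finite.'' Hyperbolic sinks are indeed isolated in $\Omega(f)$, but isolation does not give closedness: an infinite family of sinks (with periods tending to infinity, say) could accumulate on a non-periodic nonwandering point, and neither the Kupka--Smale theorem nor the general density theorem rules this out. Deciding that $\sink(f)$ is finite for generic $f$ is precisely the hard content here, and the paper obtains it from Theorem~\ref{thA} via Corollary~\ref{useful}: assuming a whole neighborhood $\mathcal{U}$ of $f$ satisfies $\per(h)=\sink(h)$, Franks's lemma together with Lemma~\ref{c1} shows that $\sink(g')$ is uniformly bounded at the period for every $g'$ in a smaller neighborhood, and the finite-preimage criterion then forces finiteness for a Kupka--Smale--generic $g'$. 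Your subsequent basin argument (pairwise-disjoint open stable basins covering $M$, connectedness collapsing them to one $W^s(p_0)=M$, contradiction via $f^{-1}$ near the source $p_0$ of $f^{-1}$) is sound and is in fact a pleasant variant of the paper's final step, which instead cites that a Morse--Smale diffeomorphism must have a source; but the proof as written simply skips the one step that all the machinery of the paper exists to supply.

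Your openness discussion also contains the gap you yourself flag: showing that every $C^1$-neighborhood of $f$ meets $\interior(A)$ establishes only that $\interior(A)$ is dense, not that $f\in\interior(A)$. A small Franks-type perturbation at a non-hyperbolic $p\in\per^*(f)$ that pushes the modulus-one eigenvalues strictly inward could move $f$ out of $A$ entirely, and no mechanism is offered to exclude this, so ``promoting'' the perturbation to $f$ itself does not work. (For comparison, the paper's own proof addresses only the density assertion, reducing to density of $\{f:\per(f)\neq\sink(f)\}$ and deriving a contradiction from an open set lying in its complement.)
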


The next application is a condition for existence of finitely many periodic points.

\begin{clly}
 \label{M-S}
Let $f$ be a diffeomorphism of a closed manifold with
finitely many periodic points of period $n$, $\forall n\in\mathbb{N}^+$.
If there is a neighborhood $\mathcal{U}$ of $f$ such that
\begin{equation}
\label{ms1}
\sup_{(h,q)\in\mathcal{U}\times\per^*(h)} \|Dh^{n_{q,h}}(q)\|<\infty,
\end{equation}
then $f$ has finitely many periodic points.
\end{clly}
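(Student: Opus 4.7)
The plan is to apply Theorem~\ref{thA} to $S := \per(f) \subseteq \Omega(f)$ equipped with the minimal-period map $n(p) := n_{p,f}$. Periodic points are nonwandering, and by the hypothesis of the corollary the map $n$ has finite preimages; hence it suffices to verify that $\per(f)$ is uniformly bounded with respect to $n$, after which Theorem~\ref{thA} forces $\per(f)$ to be finite---equivalently, only finitely many periods occur---so $f$ has finitely many periodic points.

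Write $C := \sup_{(h,q)\in\mathcal{U}\times\per^*(h)}\|Dh^{n_{q,h}}(q)\|$, which is finite by hypothesis. Use Franks's perturbation lemma to pick $\epsilon_0>0$ and a neighborhood $\mathcal{V}\subseteq\mathcal{U}$ of $f$ such that any modification of $Df$ by at most $\epsilon_0$ along a finite set $E$ is realized as the derivative along $E$ of some $h\in\mathcal{V}$ coinciding with $f$ on $E$; set $\epsilon := \epsilon_0/2$. Given $p\in\per(f)$ and admissible $L_i$ as in Definition~\ref{def1}, Franks produces $h\in\mathcal{V}$ agreeing with $f$ on the orbit of $p$ and satisfying $Dh(f^i(p))=L_i$; hence $p\in\per(h)$ with minimal period $n_p$ and $A:=Dh^{n_p}(p)=\prod_{i=0}^{n_p-1}L_i$. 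If $p\in\per^*(h)$ the hypothesis yields $\|A\|\leq C$ at once.

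The main obstacle is the remaining case, $p\in\sink(h)$: here $A$ has spectral radius $<1$ yet its operator norm could \emph{a priori} be arbitrarily large. Overcome this by a second perturbation of the last factor alone: replace $L_{n_p-1}$ by $\tilde L_{n_p-1}:=(I+B)L_{n_p-1}$ for some $B\in\mathrm{End}(T_pM)$ of norm small enough that $\tilde L_{n_p-1}$ remains within $\epsilon_0$ of $Df(f^{n_p-1}(p))$. Franks's lemma realizes the modified sequence as $\tilde h\in\mathcal{U}$ with derivative product $\tilde A=(I+B)A$. In the fixed ambient dimension $d=\dim M$, a Bauer--Fike / pseudospectral estimate guarantees that once $\|A\|$ exceeds a dimension-dependent threshold $K_0$ (independent of $p$ and of $n_p$) we may choose such a $B$ admissibly and with $\|B\|\leq 1/2$, forcing $\tilde A$ to have an eigenvalue of modulus $\geq 1$. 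Then $p\in\per^*(\tilde h)$, so $\|\tilde A\|\leq C$ by hypothesis, and $\|\tilde A-A\|\leq\|B\|\,\|A\|\leq\|A\|/2$ gives $\|A\|\leq 2C$. Thus $\|A\|\leq K:=\max(K_0,2C)$ in every case, establishing uniform boundedness.

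The chief technical difficulty---and the step to handle with care---is quantifying the pseudospectral perturbation so that the threshold $K_0$ and the admissible size of $B$ are truly uniform in $p$ and in the period $n_p$: the non-normality forced by large operator norm coexisting with small spectral radius is what allows a tiny perturbation to push an eigenvalue past the unit circle, and this mechanism must be controlled independently of the orbit under consideration, which is precisely where the fixedness of $\dim M$ plays the key role.
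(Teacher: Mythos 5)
Your overall reduction is right and mirrors the paper's strategy: apply Theorem~\ref{thA} to $\per(f)$ with the period map (Corollary~\ref{useful} in the paper), and use Franks's lemma to turn the admissible sequences $L_i$ into the derivative of a nearby diffeomorphism $h\in\mathcal{U}$ along the orbit, so that the hypothesis on $\per^*(h)$ gives the bound whenever $p$ fails to be a sink of $h$. The sink case is indeed where the work is, and you correctly identify the mechanism (large norm with spectral radius $<1$ forces eigenvalue sensitivity). The paper handles exactly this case by inserting the intermediate notion of ``spectrally uniformly bounded at the period'' (Definition~\ref{def2}) and proving Lemma~\ref{c1} (a Ma\~n\'e-style rank-one perturbation plus a trace estimate) to pass from a bound on spectral radii to a bound on operator norms; Lemma~\ref{appli1} then finishes the argument you sketch.

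The gap is that you assert the existence of the destabilizing perturbation $B$ by appealing to ``a Bauer--Fike / pseudospectral estimate,'' but Bauer--Fike runs in the wrong direction: it gives an \emph{upper} bound on how far eigenvalues can move under a perturbation (in terms of the eigenvector conditioning), whereas what you need is a \emph{constructive lower} bound --- a small $B$, of size $O(d/\|A\|)$, guaranteed to push an eigenvalue of $(I+B)A$ past the unit circle, uniformly in $p$ and $n_p$. That statement is precisely Lemma~\ref{c1}: using that $\|A\|\geq K$ forces a large matrix entry while $\spec(A)<1$ forces $|\tr(A)|\leq\dim(M)$, a rank-one $A'$ with a single entry of size $\sim 1/K$ makes $|\tr(A(I+A'))|$, hence the spectral radius, exceed $1$. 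Since you flag this as ``the chief technical difficulty'' but leave it unproved (and under a name that does not prove it), the proposal as written is incomplete at exactly the point the paper's Lemma~\ref{c1} is designed to handle. Filling in that lemma (or citing Ma\~n\'e's Lemma~II.4 in \cite{M}) would close the argument.
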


This corollary may be used to characterize Morse-Smale diffeomorphisms on closed manifolds.
Indeed, since all such diffeomorphisms are structural stable \cite{pt}
we have that all such diffeomorphisms come equipped with a neighborhood $\mathcal{U}$ satisfying
(\ref{ms1}).
It is then natural to ask if, conversely, every diffeomorphism exhibiting a neighborhood
$\mathcal{U}$ satisfying (\ref{ms1}) is Morse-Smale.
Although the answer is negative (counterexamples can be easily obtained as in \cite{hy}, \cite{s})
Corollary \ref{M-S} together with the Kupka-Smale and general density theorems
\cite{pt}, \cite{p} supply positive answer for $C^1$ generic diffeomorphisms.
More precisely, we have the following result.

\begin{clly}
\label{caca}
Every $C^1$ generic diffeomorphism of a closed manifold exhibiting a
neighborhood $\mathcal{U}$ satisfying (\ref{ms1}) is Morse-Smale.
\end{clly}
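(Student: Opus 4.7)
The plan is to combine Corollary \ref{M-S} with two classical $C^1$ generic results: the Kupka--Smale theorem and Pugh's general density theorem. Let $\mathcal{R}\subset\diff^1(M)$ be the residual subset on which (i) every periodic point is hyperbolic and the stable and unstable manifolds of periodic points intersect transversally, and (ii) $\Omega(f)=\overline{\per(f)}$.

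Now fix $f\in \mathcal{R}$ admitting a neighborhood $\mathcal{U}$ for which (\ref{ms1}) holds. Because $f$ is Kupka--Smale, every periodic point of $f$ is hyperbolic, hence isolated in $\per(f)$; together with compactness of $M$ this forces $f$ to have finitely many periodic points of period $n$ for each $n\in\mathbb{N}^+$. Thus the hypotheses of Corollary \ref{M-S} are met, and we conclude that $\per(f)$ is a finite set. The general density property then yields
\begin{equation*}
\Omega(f)=\overline{\per(f)}=\per(f),
\end{equation*}
so $\Omega(f)$ is finite. By Kupka--Smale all its points are hyperbolic and the invariant manifolds are in general position, which is exactly the definition of Morse--Smale.

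The only step that requires any real thought is checking that Corollary \ref{M-S} is applicable, i.e.\ that $f$ already has finitely many periodic points of each period; but as noted this is a direct consequence of Kupka--Smale plus compactness of $M$. Everything else is the routine combination of Corollary \ref{M-S} with standard $C^1$ generic density results, as cited in \cite{pt} and \cite{p}.
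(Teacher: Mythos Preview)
Your proof is correct and follows exactly the approach the paper indicates: the paper does not write out a separate proof of this corollary but states explicitly, just before it, that ``Corollary \ref{M-S} together with the Kupka-Smale and general density theorems \cite{pt}, \cite{p} supply positive answer for $C^1$ generic diffeomorphisms.'' Your argument spells out precisely this combination, and the same reasoning appears essentially verbatim inside the paper's proof of Corollary \ref{necessary1}.
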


Notice that corollaries \ref{necessary1} and \ref{caca} (but not \ref{M-S})
can be obtained also from the {\em weak Palis's conjecture} recently solved in \cite{c}.
Indeed, in the case of Corollary \ref{caca}, the Birkhoff-Smale theorem \cite{pt} implies that
every diffeomorphism for which there is a neighborhood $\mathcal{U}$
satisfying (\ref{ms1}) is far from ones with homoclinic points, whereas,
by the weak Palis's conjecture, every $C^1$ generic diffeomorphism
far from homoclinic points is Morse-Smale.
However our approach is simpler since it does not use the weak Palis's conjecture.

A third application is as follows.
We say that $f\in\diff^1(M)$ is a {\em star diffeomorphism} if
there is a neighborhood $\mathcal{U}\subset \diff^1(M)$ of $f$
such that every periodic point
of every diffeomorphism in $\mathcal{U}$ is hyperbolic.
Using Theorem \ref{thA} we obtain another proof of the following result due to
Liao and Pliss \cite{l}, \cite{Pl}.

\begin{clly}
\label{li-pli}
Every star diffeomorphism of a closed manifold has finitely many sinks and sources.
\end{clly}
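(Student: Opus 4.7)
The plan is to apply Theorem \ref{thA} with $S=\sink(f)$ and $n(p)=n_{p}$, and then repeat the argument for $f^{-1}$ to handle sources, since $\sou(f)=\sink(f^{-1})$ and $f^{-1}$ is also a star diffeomorphism. Because $\sink(f)\subset\Omega(f)$, it suffices to verify the two hypotheses of Theorem \ref{thA}: that $n$ has finite preimages, and that $\sink(f)$ is uniformly bounded with respect to $n$ in the sense of Definition \ref{def1}.

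Finite preimages follow immediately from the star hypothesis, since every periodic point of $f$ is hyperbolic and therefore isolated as a fixed point of the corresponding iterate; compactness of $M$ then forces $n^{-1}(k)$ to be finite for each $k\in\mathbb{N}^+$. For uniform boundedness, let $\mathcal{U}\subset\diff^{1}(M)$ be a neighborhood of $f$ witnessing the star property. I would establish the following Pliss-type estimate: there exist $\epsilon>0$ and $K>0$ such that, for every $p\in\sink(f)$ and every sequence of linear isomorphisms $L_{i}:T_{f^{i}(p)}M\to T_{f^{i+1}(p)}M$ with $\|L_{i}-Df(f^{i}(p))\|\leq\epsilon$, one has $\|\prod_{i=0}^{n_{p}-1}L_{i}\|\leq K$. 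Granted this, Theorem \ref{thA} yields that $\sink(f)$ is finite, and replacing $f$ by $f^{-1}$ completes the proof.

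The main obstacle is the Pliss-type estimate itself. The natural approach is by contradiction via Franks' lemma: if the uniform bound failed, one would find a sequence of sinks $p_{k}$ and $\epsilon$-perturbations $L_{i}^{(k)}$ whose products blow up in norm, and use Franks' lemma to realize these cocycles as derivatives along the orbits of $p_{k}$ for diffeomorphisms $g_{k}\in\mathcal{U}$. The delicate point is that hyperbolicity of $\prod L_{i}^{(k)}$ (automatic from the star property) does \emph{not} bound its norm, since a matrix can have small spectral radius yet large operator norm. One must therefore control, inductively along the orbit, the geometric means of the norms on sufficiently long subsegments of the orbit, ruling out that carefully chosen rotational perturbations on a subsegment drive an eigenvalue of some intermediate product onto the unit circle. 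This is the classical Liao--Pliss contracting argument adapted to the star setting, and it is the only nontrivial input required beyond Theorem \ref{thA}.
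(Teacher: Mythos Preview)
Your overall strategy matches the paper's: reduce to showing that $\sink(f)$ is uniformly bounded at the period, invoke Theorem \ref{thA} (packaged as Corollary \ref{useful}), and then pass to $f^{-1}$ for sources. The finite-preimage verification is also the same.

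Where you diverge is at what you call the ``main obstacle.'' You correctly observe that the star property, through Franks' lemma, only yields spectral information (no eigenvalue of modulus $1$ for any admissible product $\prod L_i$), not a norm bound, and you then defer to an unspecified ``classical Liao--Pliss contracting argument'' involving geometric means along subsegments and rotational perturbations. The paper resolves this step far more directly, in two short moves. First (Lemma \ref{c1}), uniform boundedness at the period is shown to be \emph{equivalent} to spectral uniform boundedness at the period: if some admissible product has a large matrix entry, a single rank-one perturbation of $L_0$ forces the trace, hence the spectral radius, to be large. Second (Lemma \ref{coarse1}), if spectral uniform boundedness failed for a sink $p$ with $\spec\left(\prod_{i=0}^{n_p-1} L_i\right)>1$, one takes the straight-line homotopy $L_i^t=(1-t)Df(f^i(p))+tL_i$; since $\spec(Df^{n_p}(p))<1$, continuity of the spectral radius produces some $t_0$ with $\spec\left(\prod L_i^{t_0}\right)=1$, contradicting the star hypothesis via Franks' lemma (this is the ``uniformly coarse'' condition of Definition \ref{coco} and Lemma \ref{coarse2}). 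No subsegment analysis or inductive control of geometric means is needed. Your outline is not wrong, but it leaves the decisive step as a black box; the paper's route is elementary and self-contained, and it is worth knowing.
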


It is worth to note the role played by this corollary in both the solution of the
$C^1$ stability conjecture (\cite{M}, \cite{M1}) and in the characterization of
star diffeomorphisms on closed manifolds (as the Axiom A ones \cite{a}, \cite{haa}).
The proof of this corollary given here will not use these outstanding results.

The final application is the following condition for existence of finitely many sinks.
Given $f\in\diff^1(M)$ we denote by $\sink(f)$, $\sou(f)$ and $\sad(f)$
the set of sinks, sources and saddles of $f$.

\begin{clly}
\label{finite-sinks}
A $C^1$ generic diffeomorphism $f$ on a closed manifold satisfying
$$
\cl(\sad(f)\cup\sou(f))\cap \cl(\sink(f))=\emptyset
$$
has finitely many sinks.
\end{clly}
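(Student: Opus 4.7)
The plan is to argue by contradiction using Theorem \ref{thA}. Assume that a $C^1$ generic $f$ satisfies the separation hypothesis but admits infinitely many sinks. By compactness these sinks accumulate at some $x\in\cl(\sink(f))$, and the separation hypothesis furnishes an open neighborhood $V$ of $x$ with $\overline{V}\cap\cl(\sad(f)\cup\sou(f))=\emptyset$. Set $S:=\sink(f)\cap V$, which is an infinite subset of $\Omega(f)$, and define $n\colon S\to\mathbb{N}^+$ by $n(p)=n_p$. By the Kupka-Smale theorem there are only finitely many periodic points of each period, so $n$ has finite preimages. If one can verify that $S$ is uniformly bounded with respect to $n$, then Theorem \ref{thA} forces $S$ to be finite, contradicting the choice of $x$ as an accumulation point of sinks.

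To verify uniform boundedness, I would argue again by contradiction. If it failed, there would be sinks $p_k\in S$ and linear sequences $L_i^{(k)}\colon T_{f^i(p_k)}M\to T_{f^{i+1}(p_k)}M$ satisfying $\|L_i^{(k)}-Df(f^i(p_k))\|\le 1/k$ yet $\bigl\|\prod_{i=0}^{n_{p_k}-1}L_i^{(k)}\bigr\|\ge k$. Franks' lemma realizes each such family as the derivatives along the orbit of $p_k$ of a diffeomorphism $g_k$ that coincides with $f$ outside a tubular neighborhood of that orbit and satisfies $g_k\to f$ in $C^1$; a further arbitrarily small $C^1$-perturbation of Pliss-Ma\~n\'e type (of the same flavor as the one underpinning the proof of Theorem \ref{thA}) converts the norm blow-up of $Dg_k^{n_{p_k}}(p_k)$ into a spectral blow-up, yielding $h_k\to f$ in $C^1$ for which $p_k$ is a periodic saddle or source of $h_k$, i.e., $p_k\in\per^*(h_k)$.

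Extracting a convergent subsequence gives $p_k\to y\in\overline{V}$. By a standard Baire-category argument applied to the lower-semicontinuous set-valued map $g\mapsto\cl(\per^*(g))$, the $C^1$ generic $f$ may be assumed to be a continuity point of this map, so $\limsup_k \cl(\per^*(h_k))\subset\cl(\per^*(f))$. Hence $y\in\cl(\per^*(f))=\cl(\sad(f)\cup\sou(f))$ (the last equality by Kupka-Smale), in contradiction with the choice of $V$.

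The main obstacle is the Pliss-Ma\~n\'e step: a norm blow-up of a product of matrices does not by itself imply a spectral-radius blow-up when the Jordan structure is degenerate. However, $|\det Dg_k^{n_{p_k}}(p_k)|$ remains uniformly controlled (inherited from $p_k$ being a sink of $f$), so a huge norm prevents all eigenvalues from being simultaneously small, and the technology already deployed in the proof of Theorem \ref{thA} delivers the required second perturbation. The auxiliary residual continuity of $g\mapsto\cl(\per^*(g))$ is a standard ingredient that should be folded into the residual set defining \emph{$C^1$ generic} in the statement.
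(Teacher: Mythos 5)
Your overall skeleton is the paper's: use the separation hypothesis together with a generic semicontinuity argument to obtain a perturbation-stable separation, use Kupka--Smale to ensure finitely many periodic points of each period, show a suitable uniform-bound property for the sinks, and conclude via Theorem~\ref{thA}. The paper phrases the bound in terms of the intermediate notion of a \emph{uniformly coarse} set and the separation map $h\mapsto\cl(\sad(h)\cup\sou(h))$, whereas you work directly with uniform boundedness and the map $g\mapsto\cl(\per^*(g))$; by Kupka--Smale those are essentially the same objects, so those differences are cosmetic.

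The genuine gap is in the step you yourself flag as ``the main obstacle.'' You claim that a norm blow-up of $Dg_k^{n_{p_k}}(p_k)$ plus control of $|\det Dg_k^{n_{p_k}}(p_k)|$ forces a spectral blow-up, but this is false: a sink's determinant is bounded \emph{above} by $1$, and a matrix with large norm and small determinant can have all eigenvalues tiny (e.g.\ a $2\times 2$ matrix with entries $0$, $N$, $-N^{-3}$, $0$ has norm $\approx N$, determinant $N^{-2}$, and both eigenvalues of modulus $N^{-1}$). So ``huge norm prevents all eigenvalues from being simultaneously small'' is not a correct inference, and the determinant gives no leverage here. Moreover, you point to ``the technology already deployed in the proof of Theorem~\ref{thA}'' as supplying the missing perturbation, but that proof contains no norm-to-spectral conversion; the relevant device is elsewhere in the paper. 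What actually makes this step go through is Ma\~n\'e's trace trick (Lemma~\ref{c1}, after Lemma~II.4 of \cite{M}): a large matrix \emph{entry} in the product can be converted, by composing with a perturbation of the form $I+A$ where $A$ has a single large entry in the right slot, into a large trace and hence a large spectral radius. The paper then combines this with a connectedness argument (the path $L_i^t$ in Lemma~\ref{coarse1}) to locate a perturbation with an eigenvalue exactly on the unit circle, which is precisely what the ``uniformly coarse'' notion is designed to forbid. Without Lemma~\ref{c1} or an equivalent, your contradiction never gets started, so as written the argument has a hole exactly where it needs a new idea.
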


Apparently this corollary follows also from \cite{Pl}.

\section{Proof of Theorem \ref{thA}}

\noindent
We star with an useful characterization of the sinks of a given diffeomorphisms $f$.

To motivate let us observe that every sink of $f$ is a point $p\in\Omega(f)$ for which
there are constants $K>0$, $0<\gamma<1$ and $m_0\in\mathbb{N}^+$ satisfying
\begin{equation}
\label{sisi}
\displaystyle\prod_{j=0}^{l-1}\| Df^{m_0}(f^{m_0j}(p))\|\leq K\gamma^l,
\quad\quad\forall l\in\mathbb{N}^+.
\end{equation}
Indeed, we obtain (\ref{sisi}) by taking $n$ large such that $\gamma=\|Df^{n_pn}(p)\|<1$,
$m_0=n_pn$ and $K=1$.

We shall prove later in Proposition \ref{sink} that, conversely,
every nonwandering point $p$ exhibiting these constants is a sink.
The elementary lemma below reduces it to prove that $p$ is periodic.

\begin{lemma}
 \label{short1}
If $f\in\diff^1(M)$ then every $p\in \per(f)$ for which there are $K>0$, $0<\gamma<1$ and $m_0\in\mathbb{N}^+$ satisfying
(\ref{sisi}) is a sink of $f$.
\end{lemma}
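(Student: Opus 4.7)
The plan is to show that the spectral radius of the linear map $A = Df^{n_p}(p)\colon T_pM \to T_pM$ is strictly less than $1$, since by the definition of sink this is precisely what we need.

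First, I would apply the chain rule and submultiplicativity of the operator norm to convert the hypothesis on products of norms along the orbit into a norm bound on a single derivative. Since
\[
Df^{m_0 l}(p) \;=\; \prod_{j=0}^{l-1} Df^{m_0}\!\bigl(f^{m_0 j}(p)\bigr),
\]
the hypothesis (\ref{sisi}) gives $\|Df^{m_0 l}(p)\| \le K\gamma^l$ for every $l\in\mathbb{N}^+$.

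Next, I would exploit the periodicity. Because $f^{n_p}(p) = p$, the chain rule yields $Df^{k n_p}(p) = A^k$ for every $k\in\mathbb{N}^+$. Substituting $l \mapsto l n_p$ in the inequality above therefore gives
\[
\|A^{m_0 l}\| \;=\; \|Df^{m_0 l n_p}(p)\| \;\le\; K\,\gamma^{\,l n_p}, \qquad \forall\, l\in\mathbb{N}^+.
\]
Taking the $(m_0 l)$-th root and letting $l\to\infty$,
\[
\lim_{l\to\infty} \|A^{m_0 l}\|^{1/(m_0 l)} \;\le\; \gamma^{\,n_p/m_0} \;<\; 1.
\]
By Gelfand's formula the spectral radius equals $\lim_{k\to\infty}\|A^k\|^{1/k}$, so the value along the subsequence $k = m_0 l$ coincides with $\rho(A)$; thus $\rho(A) \le \gamma^{n_p/m_0} < 1$.

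Since $\rho(A) < 1$, every eigenvalue of $Df^{n_p}(p)$ has modulus strictly less than $1$, and therefore $p$ is a sink of $f$. There is no real obstacle here; the only care needed is to choose the exponent ($l n_p$ rather than $l$) so that the product along the orbit collapses to a power of the return map $A$, at which point Gelfand's formula finishes the argument.
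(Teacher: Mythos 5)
Your proof is correct. Both you and the paper deduce that $p$ is a sink by showing that some power of the derivative along the orbit decays exponentially, but you use slightly different routes from there. The paper first upgrades \eqref{sisi} to a bound $\|Df^l(p)\|\leq K_0\lambda^l$ for \emph{all} $l\geq m_0$ (introducing an extra constant $K_0$ to absorb the remainder $r=l\bmod m_0$), and then argues by contradiction: if $Df^{n_p}(p)$ had an eigenvalue of modulus $\geq 1$, an associated eigenvector $v$ would satisfy $\|Df^{n_pn}(p)v\|\geq\|v\|$, contradicting the decay. Your argument is tighter: you only need the bound along multiples of $m_0$, you substitute $l\mapsto ln_p$ to collapse the product to a power $A^{m_0l}$ of the return map $A=Df^{n_p}(p)$, and then Gelfand's formula $\rho(A)=\lim_k\|A^k\|^{1/k}$ immediately yields $\rho(A)\leq\gamma^{n_p/m_0}<1$. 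This is cleaner in two respects: it avoids the auxiliary constant $K_0$ altogether, and it sidesteps the minor technical point (left implicit in the paper) of passing to the complexification when the eigenvalue $\lambda$ is not real. The content is the same, but your packaging is a bit more efficient.
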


\begin{proof}
Define
$$
\lambda=\gamma^{\frac{1}{m_0}}\quad\mbox{( thus }0<\lambda<1)\quad\mbox{ and }
\quad
K_0=K\sup_{0\leq r\leq m_0-1   }\left(\frac{\|f\|}{\lambda}\right)^r.
$$
If $l\geq m_0$ then
$l=m_0n+r$ for some integers $n\geq 1$ and $0\leq r\leq m_0-1$ so
$$
\|Df^l(p)\|
=\|D(f^r\circ (f^{m_0})^n)(p)\|
=\|Df^r(f^{m_0n}(p))\cdot D(f^{m_0})^n(p)\|\leq
$$
$$
\|Df^r(f^{m_0n}(p))\|\cdot \|D(f^{m_0})^n(p)\|\overset{(\ref{sisi})}{\leq}
\|f\|^rK\gamma^n
=
$$
$$
\|f\|^rK\gamma^{-\frac{r}{m_0}}\left(\gamma^{\frac{1}{m_0}}\right)^l
=\left(\frac{\|f\|}{\lambda}\right)^rK\left(\gamma^{\frac{1}{m_0}}\right)^l
$$
proving
\begin{equation}
 \label{alternative2}
\|Df^l(p)\|\leq K_0\lambda^l,
\quad\quad
\forall l\geq m_0.
\end{equation}
Now suppose that
$p$ has an eigenvalue $\lambda$ with modulus $|\lambda|\geq1$.
Then $Df^{n_pn}(p)v=\lambda^nv$ and so
$\|Df^{n_pn}(p)v\|\geq\|v\|$ for all $n\in\mathbb{N}^+$.
But (\ref{alternative2}) implies
$\|Df^{n_pn}(p)v\|\leq K_0\lambda^{n_pn}\|v\|$ for $n$ large yielding
$\|Df^{n_pn}(p)v\|\to 0$ as $n\to\infty$ thus $v=0$ which is absurd. Therefore,
$p\in\sink(f)$.
\end{proof}

Hereafter we denote by $d(\cdot,\cdot)$ the distance induced by the Riemannian metric of $M$ and by
$B(\cdot,\cdot)$ the corresponding open ball operation.
The following lemma seems to be well known (see for instance p. 213 in \cite{Pl} and p. 1978 in \cite{PS'}).
Its proof is included for the sake of completeness.

\begin{lemma}
 \label{blabla}
If $f\in\diff^1(M)$, $p\in M$, $K>0$, $0<\gamma<1$ and $m_0\in\mathbb{N}^+$ satisfy
(\ref{sisi}), then there are $\rho>0$, $K_0>0$ and $0<\lambda<1$ satisfying
\begin{equation}
\label{sisi1}
d(f^l(x),f^l(y))\leq K_0\lambda^l d(x,y),
\quad\quad\forall x,y\in B(p,\rho), \forall l\in \mathbb{N}.
\end{equation}
\end{lemma}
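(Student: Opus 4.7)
The plan is to transfer the contraction estimate (\ref{sisi}) from the orbit of $p$ to orbits of nearby points by uniform continuity of $Df^{m_0}$, and then convert the resulting derivative bound into a distance bound via the mean value inequality along short geodesics. I fix an intermediate rate $\gamma'\in(\gamma,1)$ and set $\lambda=(\gamma')^{1/m_0}\in(0,1)$.

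First pick $\epsilon>0$ with $(1+\epsilon)\gamma<\gamma'$. Since $x\mapsto\|Df^{m_0}(x)\|$ is a positive continuous function on the compact manifold $M$, uniform continuity (and positivity of the function) yields $\rho_1>0$ such that $d(x,y)<\rho_1$ implies both $\|Df^{m_0}(x)\|\leq(1+\epsilon)\|Df^{m_0}(y)\|$ and, via the mean value inequality along the short geodesic from $x$ to $y$, $d(f^{m_0}(x),f^{m_0}(y))\leq(1+\epsilon)\|Df^{m_0}(y)\|\cdot d(x,y)$ (the geodesic has length less than $\rho_1$, so every point on it lies within $\rho_1$ of $y$). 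Provided $d(f^{m_0 j}(x),f^{m_0 j}(p))<\rho_1$ for every $0\leq j\leq l-1$, multiplying the first inequality over $j$ and applying (\ref{sisi}) gives the product bound $\prod_{j=0}^{l-1}\|Df^{m_0}(f^{m_0 j}(x))\|\leq K(\gamma')^l$.

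The remaining task is to choose $\rho>0$ so that this tube hypothesis holds for every $x\in B(p,\rho)$ and every $l$. Telescoping the second inequality along the iterates of $p$ and inserting the product bound just obtained yields inductively $d(f^{m_0 j}(x),f^{m_0 j}(p))\leq K(\gamma')^j d(x,p)$, so the choice $\rho\leq \rho_1/K$ closes the bootstrap. The same telescoping applied to any pair $x,y\in B(p,\rho/2)$ (whose geodesic segment lies in $B(p,\rho)$) yields (\ref{sisi1}) for indices that are multiples of $m_0$, and intermediate iterates $l=m_0 k+r$ with $0\leq r<m_0$ are absorbed into the constant $K_0$ exactly as in the proof of Lemma \ref{short1}, by multiplying by $(\|f\|/\lambda)^r$. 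The main technical obstacle is this simultaneous choice of $\epsilon$, $\rho_1$, and $\rho$: the multiplicative error $(1+\epsilon)$ used to compare derivatives at nearby points must be small enough not to destroy the contraction, which is arranged automatically by the inequality $(1+\epsilon)\gamma<\gamma'<1$, and $\rho_1$ must in addition be smaller than the injectivity radius of $M$ so that short minimising geodesics are available.
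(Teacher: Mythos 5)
Your argument is correct and follows essentially the same route as the paper's proof: transfer (\ref{sisi}) to nearby orbits via uniform continuity of $\|Df^{m_0}\|$, close the resulting bootstrap by taking $\rho$ small compared to $\rho_1/K$, and absorb the remainder $r=l\bmod m_0$ into $K_0$. The paper phrases the induction by directly bounding $\sup_{z\in B(p,\rho)}\|Dg^n(z)\|$ rather than telescoping, but the mechanism and the constants are the same.
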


\begin{proof}
Put $g=f^{m_0}$,
take $\beta>0$ such that $(1+\beta)\gamma<1$
and define $\lambda_0=(1+\beta)\gamma$ (thus $0<\lambda_0<1$).
Fix $\epsilon_1>0$ such that
$$
\|Dg(x)\|\leq (1+\beta)\|Dg(y)\|
\quad\mbox{ whenever }
\quad
d(x,y)\leq \epsilon_1.
$$
It follows that if $n\in\mathbb{N}^+$ and $x,y\in M$ satisfy
$d(g^j(x),g^j(y))\leq \epsilon_1$,
$0\leq j\leq n-1$, then
\begin{equation}
 \label{estrela1}
\displaystyle\prod_{j=0}^{n-1}\|Dg(g^j(x))\|\leq (1+\beta)^n\displaystyle\prod_{j=0}^{n-1}
\|Dg(g^j(y))\|.
\end{equation}
Define $\hat{K}=\max\{K,1\}$ and take
$$
0<\rho<\frac{\epsilon_1}{\hat{K}}.
$$
We claim that
\begin{equation}
 \label{chega1}
d(g^l(x),g^l(y))\leq \hat{K}\lambda_0^ld(x,y),
\quad\forall x,y\in B(p,\rho), l\in\mathbb{N}.
\end{equation}
Indeed, it suffices to prove that the following assertion holds $\forall n\in\mathbb{N}^+$:
$$
d(g^l(x),g^l(y))\leq \hat{K}\lambda_0^l d(x,y),
\quad\quad\forall x,y\in B(p,\rho), \forall 0\leq l\leq n-1.
$$
Since $\hat{K}\geq 1$ we have the assertion for $n=1$.
Now suppose that the assertion holds for some $n\geq 2$.
Then, for every $z\in B(p,\rho)$ one has
$d(g^j(z),g^j(p))\leq \hat{K}\lambda_0^jd(z,p)<\hat{K}\lambda_0^j\rho\leq\epsilon_1$
for all $0\leq j\leq n-1$ by the choice of $\rho$.
Thus,
$$
\|Dg^n(z)\|\leq\displaystyle\prod_{j=0}^{n-1}\|Dg(g^j(z))\|\overset{(\ref{estrela1})}{\leq}
(1+\beta)^n\displaystyle\prod_{j=0}^{n-1}\| Dg(g^j(p))\|
\overset{(\ref{sisi})}{\leq}K((1+\beta)\gamma)^n=K\lambda_0^n
$$
proving
$$
\sup_{z\in B(p,\rho)}\|Dg^n(z)\|\leq K\lambda_0^n.
$$
Henceforth, for all $x,y\in B(p,\rho)$ one has
$$
d(g^n(x),g^n(y))\leq \left(\sup_{z\in B(p,\rho)}\|Dg^n(z)\|\right)\cdot d(x,y)
\leq K\lambda_0^n d(x,y)\leq \hat{K}\lambda_0^nd(x,y)
$$
and the assertion follows by induction.
This proves (\ref{chega1}).

Replacing $g=f^{m_0}$ in (\ref{chega1}) we have
\begin{equation}
 \label{paja1}
d(f^{m_0l}(x),f^{m_0l}(y))\leq \hat{K}\lambda_0^l d(x,y),
\quad\quad\forall x,y\in B(p,\rho), \forall l\in \mathbb{N}.
\end{equation}

Let $K_2$ be a Lipschitz constant of $f$, i.e.,
$d(f(x),f(y))\leq K_2d(x,y)$ for all $x,y\in M$.
Define
$$
\lambda=\lambda_0^{\frac{1}{m_0}}\mbox{ (thus }0<\lambda<1)
\quad\quad
\mbox{ and }
\quad\quad
K_0=\sup_{0\leq r\leq m_0-1}\max\left\{\frac{K_2^r}{\lambda^r},K_2^r\hat{K}\lambda_0^{-\frac{r}{m_0}}\right\}.
$$
Fix $x,y\in B(p,\rho)$.
It follows from the choices above that
$$
d(f^n(x),f^n(y))\leq K_0\lambda^nd(x,y),
\quad\quad
\forall 0\leq n\leq m_0-1.
$$
For $n\geq m_0$ we have $n=m_0l+r$ for some integers $l\geq 1$ and $0\leq r\leq m_0$.
Then,
$$
d(f^n(x),f^n(y))=d(f^r(f^{m_0l}(x)),f^r(f^{m_0l}(y)))
\leq K_2^rd(f^{m_0l}(x),f^{m_0l}(y))
\overset{(\ref{paja1})}{\leq}
$$
$$
K_2^r\hat{K}\lambda_0^ld(x,y)
=K_2^r\hat{K}\lambda_0^{-\frac{r}{m_0}}\left(\lambda_0^{\frac{1}{m_0}}\right)^nd(x,y)
\leq K_0\lambda^nd(x,y)
$$
proving (\ref{sisi1}).
\end{proof}

Now we prove the following result.

\begin{prop}
 \label{sink}
If $f\in\diff^1(M)$ then every $p\in \Omega(f)$ for which there are $K>0$, $0<\gamma<1$ and $m_0\in\mathbb{N}^+$ satisfying
(\ref{sisi}) is a sink of $f$.
\end{prop}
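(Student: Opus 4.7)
My plan is to invoke Lemma \ref{short1} after showing that $p$ is periodic, and I would proceed by contradiction, assuming $p\in\Omega(f)$ satisfies (\ref{sisi}) but $p\notin\per(f)$. Lemma \ref{blabla} supplies constants $\rho>0$, $K_0>0$, $0<\lambda<1$ for which (\ref{sisi1}) holds on $B(p,\rho)$, and I would fix $n_0\in\mathbb{N}^+$ large enough that $K_0\lambda^{n_0}\leq 1/2$.

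For each sufficiently small $r>0$, the nonwandering hypothesis gives $y_r\in B(p,r/4)$ and $n_r\geq 1$ with $f^{n_r}(y_r)\in B(p,r/4)$. If the $n_r$'s stayed bounded, extracting a constant subsequence $n_r\equiv n$ and letting $r\to 0$ would force $f^n(p)=p$ by continuity of $f^n$, contradicting non-periodicity; hence $n_r\to\infty$. For $r$ small enough that $n_r\geq n_0$, (\ref{sisi1}) then yields
$$
d(f^{n_r}(p),p)\leq d(f^{n_r}(p),f^{n_r}(y_r))+d(f^{n_r}(y_r),p)\leq K_0\lambda^{n_r}(r/4)+r/4\leq 3r/8,
$$
so $f^{n_r}(p)\to p$ as $r\to 0$, i.e.\ $p$ is an accumulation point of its own forward orbit.

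Fixing one such small $r$ and writing $g=f^{n_r}$, the estimate $d(g(p),p)\leq 3r/8$ together with the $(1/2)$-Lipschitz property of $g$ on $B(p,\rho)$ implies that $g$ sends the closed ball $\overline{B}(p,r)$ into itself. The iterates $g^j(p)$ then form a geometric Cauchy sequence converging to a point $p^{*}\in\overline{B}(p,r)$ with $g(p^{*})=p^{*}$, so $p^{*}$ is $f$-periodic of period dividing $n_r$.

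To close the argument I would show that every accumulation point of the forward $f$-orbit of $p$ lies in the finite set $\{p^{*},f(p^{*}),\ldots,f^{n_r-1}(p^{*})\}$: writing an arbitrary $n$ as $n=jn_r+i$ with $0\leq i<n_r$ and using $g^j(p)\to p^{*}$ gives $f^n(p)=f^i(g^j(p))\to f^i(p^{*})$ as $j\to\infty$. Combined with the previous paragraph this forces $p=f^i(p^{*})$ for some $i$, again contradicting $p\notin\per(f)$. The delicate step is the passage $n_r\to\infty$ (needed both to activate the contraction factor $K_0\lambda^{n_r}\leq 1/2$ and to rule out a direct short-period return); non-periodicity of $p$ enters there only through continuity of the finitely many iterates $f,f^2,\ldots,f^{n_0}$.
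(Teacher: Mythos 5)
Your proof is correct and follows the same overall strategy as the paper: Lemma \ref{blabla} supplies the exponential contraction (\ref{sisi1}) near $p$, the nonwandering hypothesis together with the contraction-mapping principle produces a nearby attracting periodic point, one shows $p\in\per(f)$, and Lemma \ref{short1} finishes. The two arguments differ only in how they conclude that $p$ is periodic. The paper fixes a single scale $\rho/2$, produces the fixed point $x_*$ of $f^{n_k}$, and argues that $x_*$ is isolated in $\Omega(f)$; since $\Omega(f)$ is closed and invariant and $(f^{n_k})^l(p)\to x_*$, some iterate of $p$ must equal $x_*$, so $p\in\per(f)$. You instead let the scale $r\to 0$: the contradiction hypothesis $p\notin\per(f)$ forces $n_r\to\infty$, which shows $p$ is an accumulation point of its own forward orbit, and you identify the set of such accumulation points with the finite orbit of $p^*$, getting $p=f^i(p^*)$ and hence the desired contradiction. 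Your variant avoids the paper's (somewhat glossed-over) claim that $x_*$ is isolated in $\Omega(f)$, at the price of explicitly establishing $n_r\to\infty$; note the paper simply asserts the existence of returns $n_k\to\infty$, which in fact holds for every nonwandering point whether periodic or not. One small slip in your closing remark: to rule out a bounded subsequence of the $n_r$'s you need continuity of $f^n$ for whatever bounded value $n$ appears in the constant subsequence, which need not be $\le n_0$; this does not affect the validity of the argument.
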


\begin{proof}
By Lemma \ref{blabla}
there are $\rho>0$, $K_0>0$ and $0<\lambda<1$ satisfying
(\ref{sisi1}).
As $p\in\Omega(f)$
there are sequences $n_k\to\infty$ and
$y_k\in B\left(p,\frac{\rho}{4}\right)$ such that
$$
d(f^{n_k}(y_k),p)\leq \frac{\rho}{4}\quad\quad\forall k\in\mathbb{N}.
$$
As $n_k\to\infty$ and $0<\lambda<1$ we can fix $k$ large such that
\begin{equation}
\label{siso1}
K_0\lambda^{n_k}<\frac{1}{4}.
\end{equation}
Now, take $x\in M$ with $d(x,p)\leq\frac{\rho}{2}$.
Then, $d(x,y_k)\leq \rho$ and so
$$
d(f^{n_k}(x),p)\leq d(f^{n_k}(x),f^{n_k}(y_k))+d(f^{n_k}(y_k),p)
\overset{(\ref{sisi1})}{\leq}
$$
$$
K_0\lambda^{n_k}d(x,y_k)+\frac{\rho}{4}
\leq
\left(K_0\lambda^{n_k}\cdot 2+\frac{1}{2}\right)\frac{\rho}{2}
\overset{(\ref{siso1})}{<}\frac{\rho}{2}.
$$
This together with (\ref{sisi1}) yields
$$
f^{n_k}\left(B\left[p,\frac{\rho}{2}\right]\right)\subset B\left(p,\frac{\rho}{2}\right)
\mbox{ and }
d(f^{n_k}(x),f^{n_k}(y))\leq \beta d(x,y),
$$
$\forall x,y\in B\left[p,\frac{\rho}{2}\right]$
where $\beta=K_0\lambda^{n_k}$ and $B[\cdot,\cdot]$ denotes closed ball operation.
Since $0<\beta<\frac{1}{4}$ by (\ref{siso1}) the contracting map principle provides a fixed point
$x_*\in B\left(p,\frac{\rho}{2}\right)$ of $f^{n_k}$
such that
$$
d((f^{n_k})^l(x),x_*)\leq \beta^ld(x,x_*),
\quad\quad\forall x\in B\left[p,\frac{\rho}{2}\right], l\in\mathbb{N}^+.
$$
In particular, $x_*\in\per(f)$.

As $f$ is Lipschitz continuous there is
$K_2>0$ such that
$d(f(x),f(y))\leq K_2d(x,y)$ for all $x,y\in M$. Take
$$
K_3=\sup\{K_2^r\beta^{-\frac{r}{n_k}}:0\leq r\leq n_k-1\}.
$$
Fix $x\in B\left[p,\frac{\rho}{2}\right]$ and $n\in\mathbb{N}$ large.
Then, $n=ln_k+r$ for some integers $l\geq 1$ and $0\leq r\leq n_k-1$ so
$$
d(f^n(x),f^n(x_*))=d(f^r(f^{ln_k}(x)),f^r(f^{ln_k}(x_*))\leq K_1^rd((f^{n_k})^l(x),x_*)\leq
$$
$$
K_2^r\beta^ld(x,x_*)\leq K_3\beta^{\frac{n}{n_k}}d(x,x_*).
$$
As $0<\beta^{\frac{1}{n_k}}<1$ we get
$$
\lim_{n\to \infty}d(f^n(x),f^n(x_*))\to0,
\quad\quad\forall x\in B\left[p,\frac{\rho}{2}\right].
$$
From this we obtain that $x_*$ is isolated in $\Omega(f)$.
However, $p\in \Omega(f)$ which is invariant so $(f^{n_k})^l(p)\in\Omega(f)$.
Since $d((f^{n_k})^l(p),x_*)\to 0$ as $l\to\infty$ and $x_*$ is isolated in $\Omega(f)$ which is closed
we obtain
$(f^{n_k})^l(p)=x_*$ for some $l\in\mathbb{N}^+$. As $x_*\in \per(f)$ we get
$p\in \per(f)$ so the result from Lemma \ref{short1}.
\end{proof}

We will need the following variation of the so-called {\em Pliss's lemma}
(c.f. Lemma 3.0.2 in \cite{PS}).

\begin{lemma}
\label{pliss}
For every $g\in \diff^1(M)$ and $0<\gamma_1<\gamma_2$ there are
$m\in \mathbb{N}^+$ and $c>0$ such that if $x\in M$ and $n\geq m$ is an integer
satisfying
$$
\prod_{i=1}^n\|Dg(g^i(x))\|\leq \gamma_1^n,
$$
then there are $0\leq n_1<n_2<\cdots<n_l\leq n$ with $l\geq cn$ such that
$$
\prod_{i=n_r}^j\|Dg(g^i(x))\|\leq \gamma_2^{j-n_r},
\quad \mbox{ for every } r\in\{1,\cdots,l\} \mbox{ and } j\in\{ n_r,\cdots,n\}.
$$
\end{lemma}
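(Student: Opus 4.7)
The plan is to reduce to a combinatorial statement on partial sums and apply a telescoping argument on suffix-maxima, in the style of Pliss and Mañé. Write $a_i = \log\|Dg(g^i(x))\|$ and $\alpha = \log(\gamma_2/\gamma_1) > 0$. Taking logarithms transforms the hypothesis into $\sum_{i=1}^n (a_i - \log\gamma_2) \leq -\alpha n$, and (after a harmless unit shift in the indexing convention used in the conclusion) the goal becomes: find at least $cn$ indices $k \in \{0,1,\dots,n\}$ such that $S_j \leq S_k$ for every $j \in [k,n]$, where $S_k := \sum_{i=1}^k (a_i - \log\gamma_2)$ and $S_0 := 0$. I will call such a $k$ a \emph{good index}.

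First, I would fix the compactness constants. Since $M$ is compact, $A_{\min} := \inf_M \|Dg\| > 0$ is finite. If $A_{\min} > \gamma_1$, the hypothesis is never realised and there is nothing to prove; otherwise $A_{\min} \leq \gamma_1 < \gamma_2$, and the one-sided increment bound
$$
S_k - S_{k+1} \;=\; \log\gamma_2 - a_{k+1} \;\leq\; \beta' := \log(\gamma_2/A_{\min})
$$
holds with $\beta' > 0$ depending only on $g$ and $\gamma_2$.

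Next, introduce the suffix-maximum $M_k := \max_{k \leq j \leq n} S_j$. This sequence is non-increasing in $k$, with $M_0 \geq S_0 = 0$ and $M_n = S_n \leq -\alpha n$, so telescoping gives
$$
\alpha n \;\leq\; M_0 - M_n \;=\; \sum_{k=0}^{n-1}(M_k - M_{k+1}).
$$
The crucial observation is that $M_k > M_{k+1}$ forces $S_k > M_{k+1} \geq S_{k+1}$, hence $S_k = M_k$ (so $k$ is a good index) and $M_k - M_{k+1} \leq S_k - S_{k+1} \leq \beta'$. Consequently the number of good indices in $\{0,1,\dots,n-1\}$ is at least $\alpha n/\beta'$, which proves the lemma with $c := \alpha/\beta'$ and $m := \lceil 1/c\rceil$.

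The only subtle point is the inclusive-inclusive range $i \in [n_r,j]$ in the stated conclusion, which imposes a one-index shift compared to the classical Pliss formulation (where one writes $\prod_{i=n_r+1}^j \leq \gamma_2^{j-n_r}$). This shift is absorbed cleanly into the definition of good index and incurs only a bounded loss that can be folded into $c$ and $m$. Everything else is a clean accounting argument in the spirit of \cite{Pl} and of Lemma 3.0.2 in \cite{PS}.
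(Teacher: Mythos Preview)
The paper does not supply its own proof of this lemma; it is stated with a reference to Lemma~3.0.2 in \cite{PS} and then invoked without further argument. Your proposal is precisely the standard Pliss argument---pass to logarithms, form the partial sums $S_k=\sum_{i\le k}(a_i-\log\gamma_2)$, introduce the suffix maxima $M_k=\max_{k\le j\le n}S_j$, and telescope $M_0-M_n\ge\alpha n$ against the one-step drop bound $M_k-M_{k+1}\le\beta'$---and it is correct. Your observation about the inclusive product $\prod_{i=n_r}^{j}$ versus the classical $\prod_{i=n_r+1}^{j}$ is accurate: taking $n_r=k+1$ for each good index $k$, one gets $\prod_{i=k+1}^{j}\|Dg(g^i(x))\|\le\gamma_2^{\,j-k}$, and since the paper only applies the lemma with $0<\gamma_2<1$ (in the proof of Lemma~\ref{mano}) this immediately yields $\gamma_2^{\,j-k}\le\gamma_2^{\,j-k-1}=\gamma_2^{\,j-n_r}$, so in that regime no loss in $c$ or $m$ is actually incurred.
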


Next we introduce an auxiliary definition
motivated by R. Ma\~n\'e and V. A. Pliss \cite{M}, \cite{M1},\cite{Pl}.
Denote by $[r]$ the real part of $r\in\mathbb{R}$.

\begin{defi}
Given $f\in\diff^1(M)$ we say that $S\subset M$
is {\em (MP)-contracting with respect to $n:S\to\mathbb{N}^+$}
if there are $K_1>0$, $0<\lambda<1$ and $m_0\in\mathbb{N}^+$ such that
\begin{equation}
\label{principal3}
\prod_{j=0}^{\left[\frac{n(p)}{m_0}\right]-1}\|Df^{m_0}(f^{m_0j}(p))\|
\leq K_1\lambda^{\left[\frac{n(p)}{m_0}\right]},
\quad\forall p\in S\mbox{ with } n(p)\geq m_0.
\end{equation}
\end{defi}

Notice that {\em every} set $S$ is (MP)-contracting with respect to
any bounded map $n:S\to \mathbb{N}^+$.
To see it just take any upper bound $n_0$ of $n$,
any $0<\lambda<1$, $K_1>0$ and $m_0=1+n_0$
to obtain (\ref{principal3}).
From this we deduce that every finite set $S$ is (MP)-contracting with respect to
some map with finite preimages $n: S\to \mathbb{N}^+$
(e.g. the constant map $n(p)=1$).
The following lemma improving
Lemma 2.6 in \cite{m'} proves the converse of the above property in the case we are interested in, namely,
for nonwandering points.

\begin{lemma}
\label{mano}
If $f\in\diff^1(M)$ every set $S\subset\Omega(f)$ which is (MP)-contracting
with respect to some map with finite preimages $n:S\to \mathbb{N}^+$ is finite.
\end{lemma}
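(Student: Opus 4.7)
The plan is to argue by contradiction. If $S$ is infinite, then the finite-preimage property of $n$ forces $n(S)$ to be unbounded, so one extracts a sequence of pairwise distinct $p_k \in S$ with $n(p_k) \to \infty$. Writing $g = f^{m_0}$ and $N_k = [n(p_k)/m_0] \to \infty$, the (MP)-contracting hypothesis (\ref{principal3}) reads
$$
\prod_{j=0}^{N_k - 1} \|Dg(g^j(p_k))\| \leq K_1 \lambda^{N_k}.
$$
Fix $\gamma_1, \gamma_2$ with $\lambda < \gamma_1 < \gamma_2 < 1$. For $k$ large the right-hand side is bounded by $\gamma_1^{N_k}$, so Pliss's lemma (Lemma \ref{pliss}) applied to $g$ with $\gamma_1, \gamma_2$ yields Pliss times $0 \leq n_1^k < n_2^k < \cdots < n_{l_k}^k \leq N_k$ with $l_k \geq c N_k$ for some uniform $c > 0$. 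In particular $n_1^k \leq (1-c)N_k + 1$, so the window $L_k := N_k - n_1^k \geq c N_k - 1 \to \infty$.

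Set $y_k := f^{m_0 n_1^k}(p_k)$ (up to the harmless index shift built into the statement of Pliss). The Pliss conclusion at the first Pliss time translates into the one-sided contracting estimate
$$
\prod_{t=0}^{L-1} \|Dg(g^t(y_k))\| \leq \gamma_2^L, \qquad 1 \leq L \leq L_k.
$$
Since $p_k \in \Omega(f)$ and $\Omega(f)$ is $f$-invariant and closed, $y_k \in \Omega(f)$. By compactness of $M$, pass to a subsequence with $y_k \to y^* \in \Omega(f)$. For each fixed $L$ the estimate persists for all $k$ large (so $L_k \geq L$), and continuity of $Df^{m_0}$ along fixed-length orbit segments lets the estimate pass to the limit:
$$
\prod_{t=0}^{L-1} \|Df^{m_0}(f^{m_0 t}(y^*))\| \leq \gamma_2^L, \qquad \forall L \in \mathbb{N}^+.
$$
This is precisely condition (\ref{sisi}) at $y^*$, so Proposition \ref{sink} yields $y^* \in \sink(f)$. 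Let $\mathcal{O}$ denote the (finite) $f$-orbit of $y^*$.

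For $k$ large, $y_k$ lies in the open basin of attraction of $\mathcal{O}$, hence $f^n(y_k) \to \mathcal{O}$ as $n \to \infty$; since $y_k = f^{j_k}(p_k)$ for some $j_k \in \mathbb{Z}$, this forces $f^m(p_k) \to \mathcal{O}$, i.e.\ $p_k \in W^s(\mathcal{O})$. The crux of the argument — which I expect to be the main obstacle to state cleanly — is the observation that $W^s(\mathcal{O}) \cap \Omega(f) = \mathcal{O}$: for any $q \in W^s(\mathcal{O}) \setminus \mathcal{O}$, asymptotic stability of the sink orbit provides a neighborhood $U$ of $q$ disjoint from $\mathcal{O}$ whose forward iterates are uniformly attracted into a still smaller neighborhood of $\mathcal{O}$, forcing $f^n(U) \cap U = \emptyset$ for all large $n$ and contradicting $q \in \Omega(f)$. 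Applied to the nonwandering points $p_k$, this gives $p_k \in \mathcal{O}$ for all large $k$, contradicting the fact that the $p_k$ are distinct while $\mathcal{O}$ is finite.
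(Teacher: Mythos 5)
Your argument reproduces the paper's proof essentially step by step: extract distinct $p_k$ with $n(p_k)\to\infty$ using finite preimages, apply Pliss's lemma to $g=f^{m_0}$ to obtain Pliss times, shift to $y_k = f^{m_0 n_1^k}(p_k)$, pass to a subsequential limit $y^*\in\Omega(f)$ satisfying (\ref{sisi}), and invoke Proposition~\ref{sink} to conclude $y^*$ is a sink. The only difference is cosmetic, in the final contradiction: the paper notes that sinks are isolated in $\Omega(f)$, so $y_k$ eventually equals $y^*$ and hence the $p_k$ lie in the finite periodic orbit, whereas you route through the (equivalent, standard) fact $W^s(\mathcal{O})\cap\Omega(f)=\mathcal{O}$ via the basin of attraction; either way the conclusion is the same.
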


\begin{proof}
Suppose by contradiction that $S$ is infinite.
Fix $K_1>0$, $0<\lambda<1$ and $m_0\in\mathbb{N}^+$
such that (\ref{principal3}) holds for $n$.
Since $S$ is infinite there is an infinite sequence $p_k\in S$ and,
since $n$ has finite preimages,
we can assume up to passing to a subsequence if necessary that $n(p_k)\to\infty$.
Choose constants $0<\lambda<\gamma_1<\gamma_2<1$ and let
$m\in\mathbb{N}^+$ and $c>0$ be as in the Pliss's lemma for these choices.
As $n(p_k)\to\infty$ we have $\left[\frac{n(p_k)}{m_0}\right]\to\infty$ too.
In particular we can assume $\left[\frac{n(p_k)}{m_0}\right]\geq m$ and,
since $\lambda<\gamma_1$, we can also assume that
$K_1\lambda^{\left[\frac{n(p_k)}{m_0}\right]}\leq \gamma_1^{\left[\frac{n(p_k)}{m_0}\right]}$ for all $k$.
Then, (\ref{principal3}) and the Pliss's lemma imply that for all $k\in\mathbb{N}$ there are
$0\leq n_1^k<n_2^k<\cdots<n^k_{l_k}\leq \left[\frac{n(p_k)}{m_0}\right]-1$ with
$l_k\geq c\left(\left[\frac{n(p_k)}{m_0}\right]-1\right)$ such that
$$
\prod_{j=n^k_r}^s\|Df^{m_0}(f^{m_0j}(p_k))\|\leq \gamma_2^{s-n^k_r},
$$
\begin{equation}
 \label{principal2}
\quad \forall r\in\{1,\cdots,l_k\} \mbox{ and } s\in\left\{ n_r^k,\cdots,
\left[\frac{n(p_k)}{m_0}\right]-1\right\}
\end{equation}
Define the sequence
$x_k=f^{m_0n^k_1}(p_k)$. Since $M$ is compact we can be assume that
$x_k\to p$ for some $p\in M$.

Let us prove that $f$, $p$, $K=\gamma_2^{-1}$, $\gamma=\gamma_2$ and $m_0$ as above
satisfy (\ref{sisi}).

Fix $l\in \mathbb{N}^+$.

Since $l_k\geq c\left(\left[\frac{n(p_k)}{m_0}\right]-1\right)$ we get
$\left(\left[\frac{n(p_k)}{m_0}\right]-1\right)-n^k_1\geq l_k\geq c
\left(\left[\frac{n(p_k)}{m_0}\right]-1\right)$ so
$\left[\frac{n(p_k)}{m_0}\right]-n^k_1\to\infty$ as $k\to\infty$.
Then,
\begin{equation}
\label{ladilla1}
l\leq \left[\frac{n(p_k)}{m_0}\right]-n_1^k,
\quad\quad\mbox{ for }k\mbox{ large enough}.
\end{equation}
On the other hand,
$$
\displaystyle\prod_{i=0}^{l-1}\|Df^{m_0}(f^{m_0i}(x_k))\|
=\displaystyle\prod_{j=n_1^k}^{n_1^k+l-1}\|Df^{m_0}(f^{m_0j}(p_k))\|.
$$
Taking $s=n_1^k+l-1$ one has
$s\in\left\{ n_r^k,\cdots,
\left[\frac{n(p_k)}{m_0}\right]-1\right\}$ by (\ref{ladilla1}) so
$$
\displaystyle\prod_{i=0}^{l-1}\|Df^{m_0}(f^{m_0i}(x_k))\|
\leq \gamma_2^{l-1}=K\gamma^l
$$
by (\ref{principal2}) with $r=1$.

Since $l$ is fixed we can take $k\to\infty$ above to obtain
$$
\displaystyle\prod_{i=0}^{l-1}\|Df^{m_0}(f^{m_0i}(p))\|\leq K\gamma^l.
$$
As $l\in\mathbb{N}^+$ is arbitrary we obtain (\ref{sisi}).

Finally, since $p_k\in \Omega(f)$ and $\Omega(f)$ is invariant one has $x_k\in \Omega(f)$
thus $p\in \Omega(f)$. Then, since (\ref{sisi}) holds, Proposition \ref{sink} implies that $p$ is a sink of $f$.
Since the sinks are isolated in $\Omega(f)$ and $p_k\in \Omega(f)$ we conclude that the sequence
$p_k$ is finite, a contradiction.
This ends the proof.
\end{proof}

The next lemma is essentially contained in \cite{M}.
Given $f\in\diff^1(M)$ we define
$$
m(f)=\inf_{p\in M}m(Df(x)),
$$
where $m(\cdot)$ is the co-norm operation induced by $\|\cdot\|$.

\begin{lemma}
 \label{l1}
For every $f\in\diff^1(M)$, $\epsilon_0>0$, $m\in \mathbb{N}^+$, $n\in\mathbb{N}$ with $n\geq m$
and $p\in M$ there is a sequence of linear isomorphisms
$L_i: T_{f^i(p)}M\to T_{f^{i+1}(p)}M$, $0\leq i\leq n-1$, such that
\begin{enumerate}
 \item[(i)]
$\|L_i-Df(f^i(p))\|\leq \|f\|(2+\epsilon_0)\epsilon_0$ for $0\leq i\leq n-1$.
\item[(ii)]
$$
\displaystyle\prod_{j=0}^{\left[\frac{n}{m}\right]-1}\|Df^m(f^{mj}(p))\|
\leq
(1+\epsilon_0)^{-r}
(m(f))^{-r}\left((1+\epsilon_0)^m\frac{\epsilon_0}{\dim(M)}\right)^{-\left[\frac{n}{m}\right]}
\left\|
\displaystyle\prod_{i=0}^{n-1}L_i
\right\|,
$$
where $r=n-\left[\frac{n}{m}\right]m$.
\end{enumerate}
\end{lemma}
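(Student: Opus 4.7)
My plan is a Mañé-style construction combining a uniform scalar amplification at every step with a rank-one direction-alignment perturbation at the start of each block. Partition $\{0,\ldots,n-1\}$ into $[n/m]$ blocks of length $m$ plus a tail of length $r$, and write $B_j = Df(f^{jm}(p))$ and $C_j = Df^{m-1}(f^{jm+1}(p))$, so that $Df^m(f^{jm}(p)) = C_jB_j$. Set $L_i = (1+\epsilon_0)Df(f^i(p))$ for every $i$ that is not the start of a block, and $L_{jm} = (1+\epsilon_0)(B_j + E_j)$ where $E_j$ is a rank-one operator with $\|E_j\| = \epsilon_0\|B_j\|$. Then $\|L_{jm} - B_j\| \leq \epsilon_0\|B_j\| + (1+\epsilon_0)\|E_j\| = (2+\epsilon_0)\epsilon_0\|B_j\| \leq (2+\epsilon_0)\epsilon_0\|f\|$, while $\|L_i - Df(f^i(p))\| = \epsilon_0\|Df(f^i(p))\| \leq \epsilon_0\|f\|$ at the other steps, giving (i).

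For (ii), build the orbit of an initial unit vector $x_0$ under the $L_i$'s. Within block $j$ the composition equals $(1+\epsilon_0)^m C_j(B_j + E_j)$, so we need a lower bound on $\|C_j(B_j + E_j)x_j\|$. This is supplied by the rank-one perturbation lemma: pick an orthonormal basis $\{e_1,\ldots,e_d\}$ of $T_{f^{jm+1}(p)}M$, let $k$ maximize $\|C_je_k\|$, define $E_j$ as rank one with $E_jx_j = \pm\epsilon_0\|B_j\|\|x_j\|\,e_k$ (and vanishing on $x_j^\perp$), and choose the sign by the parallelogram identity $\|a+b\|^2+\|a-b\|^2=2(\|a\|^2+\|b\|^2)$. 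The pigeonhole bound $\max_k\|C_je_k\|\geq\|C_j\|/\sqrt{\dim(M)}$ together with $\|B_j\|\|C_j\|\geq\|C_jB_j\|=\|Df^m(f^{jm}(p))\|$ then yields $\|C_j(B_j + E_j)x_j\|\geq(\epsilon_0/\sqrt{\dim(M)})\|Df^m(f^{jm}(p))\|\|x_j\|$, so each block amplifies $\|x_j\|$ by at least $(1+\epsilon_0)^m(\epsilon_0/\sqrt{\dim(M)})\|Df^m(f^{jm}(p))\|$, which is stronger than the $(1+\epsilon_0)^m\epsilon_0/\dim(M)$ factor needed.

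On the tail, the co-norm bound $m(Df^r(f^{n-r}(p)))\geq m(f)^r$ and the scalar amplification give a contribution of at least $(1+\epsilon_0)^r m(f)^r$, producing the $(1+\epsilon_0)^{-r}(m(f))^{-r}$ factor in (ii). Multiplying the per-block amplifications across $[n/m]$ blocks and joining with the tail yields (ii). The main obstacle is the simultaneous bookkeeping of the three sources of constants -- scalar amplification at every step, rank-one direction alignment once per block, and tail co-norm -- within the single tolerance $(2+\epsilon_0)\epsilon_0\|f\|$ in (i); the decomposition $L_{jm} = (1+\epsilon_0)(B_j + E_j)$ is crucial precisely because the factor $(2+\epsilon_0)$ absorbs both the scalar amplification and the rank-one perturbation applied in sequence.
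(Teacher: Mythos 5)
Your overall architecture matches the paper's: scale every $Df$ by $(1+\epsilon_0)$, insert an extra perturbation only at the first index of each length-$m$ block, thread a test vector through the composition to lower-bound the norm of $\prod L_i$, and use the co-norm estimate $m(Df^r)\geq m(f)^r$ for the residual $r$ indices. The genuine difference is in how the block-head perturbation is produced. The paper writes $L_{mj}=(1+\epsilon_0)Df(f^{mj}(p))P_j$ with a near-identity $P_j$ supplied by Ma\~n\'e's Lemma~II.6 (cited as a black box), which gives the amplification factor $\epsilon_0/\dim(M)$ per block. You instead use an additive rank-one perturbation $L_{jm}=(1+\epsilon_0)(B_j+E_j)$ with $E_j$ constructed explicitly from a norm-maximizing basis vector of $C_j$, and you obtain the amplification via the parallelogram identity. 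This is self-contained (no appeal to Lemma~II.6), and it actually yields the slightly sharper factor $\epsilon_0/\sqrt{\dim(M)}$, which dominates $\epsilon_0/\dim(M)$, so inequality (ii) still follows. Your estimate for (i) is also correct: $L_{jm}-B_j=\epsilon_0 B_j+(1+\epsilon_0)E_j$, hence $\|L_{jm}-B_j\|\leq(2+\epsilon_0)\epsilon_0\|B_j\|\leq(2+\epsilon_0)\epsilon_0\|f\|$.

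One small gap to close: the statement requires the $L_i$ to be linear isomorphisms, but $B_j+E_j$ could in principle be singular for your chosen $E_j$. This is a degenerate situation and easy to repair — for instance, replace $E_j$ by $tE_j$ for a suitable $t\in(1/\sqrt{2},1)$: the determinant $\det(B_j+tE_j)$ is a nonzero polynomial in $t$ (nonzero at $t=0$), so it vanishes for at most $\dim(M)$ values; the parallelogram bound degrades only by the factor $t$, and since $t/\sqrt{\dim(M)}\geq 1/\dim(M)$ for such $t$ (using $\dim(M)\geq 2$), inequality (ii) with the constant $\epsilon_0/\dim(M)$ is preserved, while (i) only improves. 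You should also note that the sign in the parallelogram step is $\mathrm{sgn}\langle C_jB_jx_j,\,C_je_k\rangle$, which does not depend on $t$, so the choice of sign and the choice of $t$ do not interfere. With this fix, your argument is complete.
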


\begin{proof}
We follow closely the proof of Lemma II.5 in \cite{M}.
Fix any nonzero vector $v\in T_pM$. By Lemma II.6 in \cite{M} we can choose linear isomorphisms
$P_j: T_{f^{mj}(p)}M\to T_{f^{mj}(p)}M$, $0\leq j\leq \left[\frac{n}{m}\right]-1$
such that $\|P_j-I\|\leq \epsilon_0$
and
$$
\|(Df^m(f^{mj}(p))P_j)v_j\|\geq \left(\frac{\epsilon_0}{\dim(M)}\right)
\|Df^m(f^{mj}(p))\|\cdot\|v_j\|,
\quad\quad
\forall 0\leq j\leq \left[\frac{n}{m}\right]-1,
$$
where $v_j$ is defined inductively by
$v_0=v$ and
$$
v_j=Df^m(f^{m(j-1)}(p))P_{j-1}v_{j-1},
\quad\quad \forall j\geq 1.
$$
This allows us to define the sequence $L_i: T_{f^i(p)}M\to T_{f^{i+1}(p)}M$,
$0\leq i\leq n-1$, by
$$
L_i = \left\{
\begin{array}{rcl}
(1+\epsilon_0)Df(f^{mj}(p)) P_j &\mbox{if} &i=mj\mbox{ is a multiple of }m,\\
(1+\epsilon_0)Df(f^i(p))&  &\mbox{otherwise}.
\end{array}
\right.
$$
Let us prove that this sequence works.

First observe that
$\|L_i-Df(f^i(p))\|$ is either
$$
\|(1+\epsilon_0)Df(f^{mj}(p))P_j-Df(f^{mj}(p))\|
\quad \mbox{ or }
\quad
\|(1+\epsilon_0)Df(f^i(p))-Df(f^i(p))\|
$$
depending on whether
$i=mj$ is a multiple of $m$ or not.
In the former case we have
$$
\|(1+\epsilon_0)Df(f^i(p))P_j-Df(f^i(p))\|
\leq
$$
$$
\|(1+\epsilon_0)Df(f^{mj}(p))P_j-(1+\epsilon_0)Df(f^{mj}(p))\|
+
\|(1+\epsilon_0)Df(f^{mj}(p))-Df(f^{mj}(p))\|
$$
$$
\leq\|Df(f^{mj}(p))\|\cdot (1+\epsilon_0)\cdot\|P_j-I\|
+
\|Df(f^{mj}(p))\|\epsilon_0\leq
$$
$$
\|f\|(1+\epsilon_0)\epsilon_0+\|f\|\epsilon_0
=\|f\|(2+\epsilon_0)\epsilon_0
$$
and, in the later,
$$
\|L_i-Df(f^i(p))\|=\|(1+\epsilon_0)Df(f^i(p))-Df(f^i(p))\|\leq \|f\|\epsilon_0
\leq \|f\|(2+\epsilon_0)\epsilon_0
$$
proving (i).

To prove (ii) we notice that
$$
\left\|
\displaystyle\prod_{i=0}^{n-1}L_i
\right\|
\cdot \|v\|
\geq
\left\|
\left(
\displaystyle\prod_{i=0}^{n-1}L_i
\right)
v
\right\|
=
(1+\epsilon_0)^{n}
\left\|
\left(
\displaystyle\prod_{i=0}^{n-1}\hat{L}_i
\right)
v
\right\|,
$$
where $\hat{L}_i$ is either $Df(f^{mj}(p))P_j$ or $Df(f^i(p))$
depending on whether
$i=mj$ is a multiple of $m$ or not.
Thus, the choice of the $P_j$'s implies
$$
\left\|
\displaystyle\prod_{i=0}^{n-1}L_i
\right\|
\cdot \|v\|
\geq
(1+\epsilon_0)^{n}(m(f))^r
\left\|
\left(
\displaystyle\prod_{j=0}^{\left[\frac{n}{m}\right]-1}
Df^m(f^{mj}(p))P_j
\right)
v
\right\|
\geq
$$
$$
(1+\epsilon_0)^{n}(m(f))^r
\left(
\frac{\epsilon_0}{\dim(M)}
\right)^{\left[\frac{n}{m}\right]}
\cdot
\left(
\displaystyle\prod_{j=0}^{\left[\frac{n}{m}\right]-1}
\| Df^m(f^{mj}(p))\|
\right)
\cdot
\|v\|
$$
hence
$$
\displaystyle\prod_{j=0}^{\left[\frac{n}{m}\right]-1}
\|Df^m(f^{mj}(p))\|
\leq
(1+\epsilon_0)^{-n}(m(f))^{-r}
\left(
\frac{\epsilon_0}{\dim(M)}
\right)^{-\left[\frac{n}{m}\right]}
\left\|
\displaystyle\prod_{i=0}^{n-1}
L_i
\right\|.
$$
But the definition of $r$ implies
$$
(1+\epsilon_0)^{-n}
\left(
\frac{\epsilon_0}{\dim(M)}
\right)^{-\left[\frac{n_p}{m}\right]}
=\left(
(1+\epsilon_0)^m\frac{\epsilon_0}{\dim(M)}
\right)^{-\left[\frac{n}{m}\right]}(1+\epsilon_0)^{-r}
$$
yielding (ii).
\end{proof}

From this lemma we obtain the following corollary.

\begin{clly}
\label{laprueva1}
Every set which is uniformly bounded with respect
to $n: S\to \mathbb{N}^+$ is also (MP)-contracting with respect to $n$.
\end{clly}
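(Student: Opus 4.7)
The plan is to use Lemma \ref{l1} as the bridge: it produces, for any iterate length $n$ and any chosen ``block size'' $m$, a specific perturbation $L_i$ of $Df(f^i(p))$ whose product controls the block-product $\prod_{j=0}^{[n/m]-1}\|Df^m(f^{mj}(p))\|$ from above. Uniform boundedness then bounds this product by a constant, and the extra factor $\big((1+\epsilon_0)^m\epsilon_0/\dim(M)\big)^{-[n/m]}$ produces the exponential decay provided $m$ is chosen large enough.

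\textbf{Step 1 (fix the perturbation scale).} Let $\epsilon,K>0$ be the constants from the uniform boundedness of $S$. Choose $\epsilon_0>0$ small enough that $\|f\|(2+\epsilon_0)\epsilon_0\leq \epsilon$. This choice guarantees that any sequence $L_i$ produced by Lemma \ref{l1} (with parameter $\epsilon_0$) satisfies the perturbation bound in Definition \ref{def1}.

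\textbf{Step 2 (fix the block size).} Once $\epsilon_0$ is fixed, note that $(1+\epsilon_0)^m\epsilon_0/\dim(M)\to\infty$ as $m\to\infty$. Pick $m_0\in\mathbb{N}^+$ large enough that
$$
\lambda := \left((1+\epsilon_0)^{m_0}\tfrac{\epsilon_0}{\dim(M)}\right)^{-1}<1.
$$

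\textbf{Step 3 (apply Lemma \ref{l1} and uniform boundedness).} For each $p\in S$ with $n(p)\geq m_0$, apply Lemma \ref{l1} with $n=n(p)$ and $m=m_0$ to obtain $L_i:T_{f^i(p)}M\to T_{f^{i+1}(p)}M$, $0\leq i\leq n(p)-1$, with $\|L_i-Df(f^i(p))\|\leq\|f\|(2+\epsilon_0)\epsilon_0\leq\epsilon$. By uniform boundedness, $\|\prod_{i=0}^{n(p)-1}L_i\|\leq K$. Plugging this into conclusion (ii) of Lemma \ref{l1} with $r=n(p)-[n(p)/m_0]m_0\in\{0,\dots,m_0-1\}$ gives
$$
\prod_{j=0}^{[n(p)/m_0]-1}\|Df^{m_0}(f^{m_0j}(p))\|\leq (1+\epsilon_0)^{-r}(m(f))^{-r}\,\lambda^{[n(p)/m_0]}\,K.
$$

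\textbf{Step 4 (absorb the remainder factors).} Since $0\leq r\leq m_0-1$ and $m(f)>0$ (compactness of $M$ plus invertibility of $Df$), the quantity $(1+\epsilon_0)^{-r}(m(f))^{-r}$ is bounded above by $C:=\max\{1,(m(f))^{-(m_0-1)}\}$, independently of $p$. Setting $K_1:=KC$ yields (\ref{principal3}) for every $p\in S$ with $n(p)\geq m_0$, which is exactly the definition of (MP)-contracting with respect to $n$.

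There is no real obstacle; the only care needed is that the constants $\epsilon_0$ and $m_0$ are chosen in the right order (first $\epsilon_0$ to align the perturbation scales, then $m_0$ to produce the exponential decay), and that the residual factor depending on $r$ is bounded uniformly in $p$ because $r$ lives in a finite range $\{0,\dots,m_0-1\}$.
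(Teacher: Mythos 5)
Your proposal is correct and follows essentially the same route as the paper: choose $\epsilon_0$ first so that Lemma \ref{l1}(i) places the perturbations within the $\epsilon$-window of Definition \ref{def1}, then choose $m_0$ to make $\lambda=\big((1+\epsilon_0)^{m_0}\epsilon_0/\dim(M)\big)^{-1}<1$, and finally absorb the $r$-dependent factors (with $0\leq r\leq m_0-1$) into a uniform constant. The only cosmetic difference is that you bound $(1+\epsilon_0)^{-r}(m(f))^{-r}$ by $\max\{1,(m(f))^{-(m_0-1)}\}$ rather than by the paper's $\sup_{0\leq r\leq m_0-1}(1+\epsilon_0)^{-r}(m(f))^{-r}$, which changes nothing.
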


\begin{proof}
Let $S$ be uniformly bounded with respect to $n:S\to \mathbb{N}^+$, i.e.,
there are positive constants $\epsilon$ and $K$ satisfying Definition \ref{def1}.
Fix $\epsilon_0>0$ with
$\|f\|(2+\epsilon_0)\epsilon_0<\epsilon$ and, for such an $\epsilon_0$, take
$m_0\in\mathbb{N}^+$ such that
$(1+\epsilon_0)^{m_0}\frac{\epsilon_0}{\dim(M)}>1$.
Define
$$
K_1=K\sup_{0\leq r\leq m_0-1}(1+\epsilon_0)^{-r}(m(f))^{-r}
\quad\mbox{ and }\quad
\lambda=\left((1+\epsilon_0)^{m_0}\frac{\epsilon_0}{\dim(M)}\right)^{-1}.
$$
Notice that $K_1>0$ and $0<\lambda<1$.

Now suppose that $p\in S$ satisfies $n(p)\geq m_0$.
Then, applying Lemma \ref{l1} to $n=n(p)$ and $m=m_0$,
we obtain a sequence of linear isomorphisms
$L_i: T_{f^i(p)}M\to T_{f^{i+1}(p)}M$, $0\leq i\leq n(p)-1$ satisfying
(i) and (ii) of this lemma.
By (i) and the choice of $\epsilon_0$ above we obtain
$\|L_i-Df(f^i(p))\|\leq\epsilon$ for $0\leq i\leq n(p)-1$. Then,
$$
\left\|\displaystyle\prod_{i=0}^{n(p)-1}L_i\right\|\leq K
$$
by Definition \ref{def1}.
Replacing in (ii) we get
$$
\displaystyle\prod_{j=0}^{\left[\frac{n(p)}{m_0}\right]-1}\|Df^{m_0}(f^{m_0j}(p))\|
\leq
(1+\epsilon_0)^{-r}
(m(f))^{-r}\left((1+\epsilon_0)^{m_0}\frac{\epsilon_0}{\dim(M)}\right)^{-\left[\frac{n(p)}{m_0}\right]}
K,
$$
where $r=n(p)-\left[\frac{n(p)}{m_0}\right]m_0$.
But clearly $0\leq r\leq m_0-1$ so the definition of $K_1$ and $\lambda$ above yields (\ref{principal3}).
This proves the result.
\end{proof}

\begin{proof}[Proof of Theorem \ref{thA}]
If $S$ is uniformly bounded
with respect to some map with finite preimages $n:S\to \mathbb{N}^+$, then
it is (MP)-contracting with respect to $n$ by Corollary \ref{laprueva1}.
If, additionally, $S\subset \Omega(f)$ and $n$ has finite preimages, then $S$ is finite by Lemma \ref{mano}.
\end{proof}

\section{Applications}

\noindent
In this section we give applications of Theorem \ref{thA} by proving
corollaries \ref{cofin}, \ref{necessary1}, \ref{M-S}, \ref{li-pli} and \ref{finite-sinks}.
The key ingredient is the lemma below
\cite{F} (c.f. Lemma 2.1.1 in \cite{PS}).

\begin{lemma}
\label{frank}[Franks's lemma]
Let $f\in \diff^1(M)$ and $\mathcal{W}(f)\subset \diff^1(M)$ be a neighborhood
of $f$.
Then, there are $\epsilon>0$ and a neighborhood $\mathcal{ W}_0(f)\subset
\mathcal{W}(f)$ of $f$ such that if $g'\in \mathcal{ W}_0(f)$, $\{x_1,\cdots ,x_n\}\subset M$
is a finite set, $U\subset M$ is a neighborhood of $\{x_1,\cdots ,x_n\}$
and $L_i:T_{x_i}M\to T_{g'(x_i)}M$ are linear maps satisfying
$\| L_i-Dg'(x_i)\| <\epsilon$ ($\forall i=1,\cdots ,n$),
then there is $g\in \mathcal{ W}(f)$ such that $g(x)=g'(x)$
in $\{x_1,\cdots ,x_n\}\cup (M\setminus U)$, and
$Dg(x_i)=L_i$ for every $i=1,\cdots ,n$.
\end{lemma}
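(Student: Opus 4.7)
The plan is to construct $g$ by adding a localized $C^1$-small perturbation to $g'$ near each point $x_i$. Since $\{x_1,\dots,x_n\}$ is finite we may pick pairwise disjoint open neighborhoods $V_i\subset U$ of the $x_i$, small enough that the exponential maps $\exp_{x_i}$ and $\exp_{g'(x_i)}$ give well-behaved charts identifying a neighborhood of $x_i$ (resp.\ of $g'(x_i)$) with a neighborhood of $0$ in $T_{x_i}M$ (resp.\ in $T_{g'(x_i)}M$). In these charts $g'$ becomes a smooth map between open subsets of Euclidean spaces sending $0$ to $0$, with differential $Dg'(x_i)$ at the origin.

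Set $H_i:=L_i-Dg'(x_i)$, so $\|H_i\|<\epsilon$. Fix once and for all a standard bump function $\phi:\mathbb{R}^{\dim(M)}\to[0,1]$ with $\phi\equiv 1$ on a neighborhood of $0$, supported in the open unit ball, and let $C:=\sup\|D\phi\|$. For $r>0$ put $\phi_r(v):=\phi(v/r)$, so that $\phi_r$ is supported in the ball of radius $r$ and $\|D\phi_r\|\leq C/r$. Choose $r$ small enough that the $r$-balls around each $x_i$ remain pairwise disjoint and are contained in both $U$ and the chart domains. Then, in the chart around $x_i$, define
\[
g(v):=g'(v)+\phi_r(v)\,H_i\,v,
\]
and extend $g:=g'$ outside $\bigcup_i V_i$. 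Since the perturbation vanishes at $v=0$ we have $g(x_i)=g'(x_i)$; and since $\phi_r\equiv 1$ near $0$, differentiating at $0$ gives $Dg(x_i)=Dg'(x_i)+H_i=L_i$, as required. By construction $g$ also agrees with $g'$ off $U$.

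The key step is the $C^1$ estimate. The perturbation has pointwise norm at most $\|H_i\|\,r\leq\epsilon r$, and its differential equals $\phi_r(v)H_i+(H_iv)\otimes D\phi_r(v)$, whose norm is bounded by $\|H_i\|+\|H_i\|\cdot r\cdot(C/r)=(1+C)\|H_i\|\leq(1+C)\epsilon$, independently of $r$. The main technical point is precisely this cancellation between the factor $v$ (of order $r$) and the gradient of $\phi_r$ (of order $1/r$): it is what allows $r$ to be shrunk at will to fit the perturbations into any chart domain or inside $U$ without weakening the $C^1$ bound. Given the target neighborhood $\mathcal{W}(f)$, one now chooses $\epsilon>0$ and $\mathcal{W}_0(f)\subset\mathcal{W}(f)$ so that any $C^1$ perturbation of size at most $(1+C)\epsilon$ (inflated by the bounded distortion of the exponential charts) applied to a diffeomorphism in $\mathcal{W}_0(f)$ still lies in $\mathcal{W}(f)$. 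The resulting $g$ is then a diffeomorphism in $\mathcal{W}(f)$ satisfying all of the required conditions.
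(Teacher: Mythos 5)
The paper does not prove this lemma; it is quoted as a known result with citations to Franks \cite{F} and to Pujals--Sambarino \cite{PS}. Your proof is, however, essentially the standard argument behind Franks's lemma, and it is correct in outline: the crux is exactly the scale-invariance you highlight, namely that in the bump-function perturbation $\phi_r(v)\,H_i v$ the $O(r)$ smallness of $|v|$ on the support of $\phi_r$ cancels the $O(1/r)$ blowup of $D\phi_r$, so the $C^1$ size of the perturbation is $(1+C)\|H_i\|$ uniformly in $r$. This lets $r$ be shrunk to keep the supports disjoint and inside $U$, while $\epsilon$ controls the $C^1$ norm. Two points you leave implicit and should at least mention: (a) a map $C^1$-close to a diffeomorphism of a compact manifold is itself a diffeomorphism (needed to conclude $g\in\diff^1(M)$, not merely that $g$ is a $C^1$ map); and (b) the constants governing the distortion of the exponential charts and the injectivity radius are uniform over $M$ by compactness and can be taken uniform over $g'\in\mathcal W_0(f)$ by shrinking $\mathcal W_0(f)$ --- this uniformity is what makes $\epsilon$ independent of the chosen finite set $\{x_1,\dots,x_n\}$ and of $g'$, which is precisely the content of the lemma as the paper uses it. With those remarks added, your argument is a complete and correct proof.
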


A first application of this lemma is the following.

\begin{lemma}
 \label{proof-cofin}
Let $f\in\diff^1(M)$, $S\subset M$ and $n: S\to\mathbb{N}^+$ be such that
there is a neighborhood $\mathcal{U}$ of $f$ satisfying
$$
\sup_{(h,p)\in \mathcal{U}\times S}\|Dh^{n(p)}(p)\|<\infty.
$$
Then, $S$ is uniformly bounded with respect to $n$.
\end{lemma}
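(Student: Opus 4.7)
The plan is to realize an arbitrary linear perturbation $(L_i)$ along the orbit of $p$ as the derivative of a nearby diffeomorphism, so that the hypothesized supremum directly controls $\|\prod L_i\|$. The tool for doing this is Franks's lemma.

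First I would apply Lemma \ref{frank} with $\mathcal{W}(f) = \mathcal{U}$ to obtain an $\epsilon>0$ and a neighborhood $\mathcal{W}_0(f)\subset\mathcal{U}$ with the property that small linear perturbations along any finite set of points can be realized as derivatives of some $g\in\mathcal{U}$. Set $K=\sup_{(h,p)\in\mathcal{U}\times S}\|Dh^{n(p)}(p)\|$, which is finite by hypothesis. The claim is that this choice of $\epsilon$ and $K$ works in Definition \ref{def1}.

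Next, fix $p\in S$ and a sequence $L_i\colon T_{f^i(p)}M\to T_{f^{i+1}(p)}M$, $0\le i\le n(p)-1$, with $\|L_i-Df(f^i(p))\|\le\epsilon$. In the case where the orbit segment $\{p,f(p),\ldots,f^{n(p)-1}(p)\}$ consists of distinct points, I would apply Franks's lemma with $g'=f$, this finite set as $\{x_1,\ldots,x_{n(p)}\}$, and the prescribed $L_i$'s. This produces $g\in\mathcal{U}$ satisfying $g(f^i(p))=f^{i+1}(p)$ (so by induction $g^i(p)=f^i(p)$ for $0\le i\le n(p)$) and $Dg(f^i(p))=L_i$. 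Therefore
$$
Dg^{n(p)}(p)=\prod_{i=0}^{n(p)-1}Dg(g^i(p))=\prod_{i=0}^{n(p)-1}L_i,
$$
and the hypothesis immediately gives $\left\|\prod_{i=0}^{n(p)-1}L_i\right\|=\|Dg^{n(p)}(p)\|\le K$, which is exactly (\ref{uniformly}).

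The one subtlety I expect is the case where $p$ is periodic for $f$ of some period $k<n(p)$, so that the orbit segment has repetitions and Franks's lemma cannot directly prescribe two different derivatives $L_i\ne L_{i+k}$ at the same point $f^i(p)$. The standard way around this is a preliminary arbitrarily small $C^1$ perturbation of $f$ (still inside $\mathcal{U}$ and for which the derivative at each orbit point is $\epsilon/2$-close to $Df(f^i(p))$) that separates the orbit $\{p,f(p),\ldots,f^{n(p)-1}(p)\}$ into $n(p)$ distinct points; after such a perturbation the argument above applies, at the cost of shrinking $\epsilon$ and enlarging $K$ by a universal factor. This bookkeeping is the one technical point, but it is routine and does not alter the structure of the proof.
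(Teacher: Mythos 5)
Your argument matches the paper's proof step for step: take $K$ to be the hypothesized supremum, fix $p\in S$ and an $\epsilon$-perturbation $(L_i)$ of $Df$ along the orbit segment $p,f(p),\ldots,f^{n(p)-1}(p)$, apply Franks's lemma with $g'=f$ to realize the $L_i$'s as the derivatives of some $g\in\mathcal{U}$ along that segment, and conclude $\bigl\|\prod_{i}L_i\bigr\|=\|Dg^{n(p)}(p)\|\le K$. The subtlety you flag is genuine, and in fact the paper's proof has the same unaddressed gap: it feeds the list $x_i=f^i(p)$, $0\le i\le n(p)-1$, directly into Franks's lemma, which is stated for a finite \emph{set} of points (hence distinct), without considering the case where $p$ is periodic of period $k<n(p)$ and the list repeats. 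This case cannot be excluded here, since Definition~\ref{def1} quantifies over arbitrary sequences $(L_i)$ with no consistency requirement at repeated base points, and the sequences produced by Lemma~\ref{l1} do depend on $i$ even when the underlying orbit point does not.

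Your proposed remedy --- a preliminary perturbation separating the orbit segment into distinct points --- is the standard cure, but the bookkeeping is a bit more delicate than you suggest. A small $C^1$ perturbation of $f$ cannot both leave the orbit of $p$ pointwise fixed and break its periodicity, so the separated orbit necessarily visits nearby but different points with different tangent spaces, and the given $L_i\colon T_{f^i(p)}M\to T_{f^{i+1}(p)}M$ must first be transported (via local charts or parallel transport) to maps between the new tangent spaces before Franks's lemma can be applied; one then needs to verify that the transported maps remain an $\epsilon$-perturbation and that the norm of their product is comparable to $\bigl\|\prod_{i}L_i\bigr\|$, with constants independent of $p$ and $n(p)$. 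Spelling out this transport step would complete your argument --- and the paper's.
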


\begin{proof}
Take $\mathcal{W}(f)=\mathcal{U}$ in the Franks's lemma to obtain
the neighborhood $\mathcal{W}_0(f)\subset\mathcal{W}(f)$
and $\epsilon>0$.
Take
$$
K=\sup_{(h,p)\in \mathcal{U}\times S}\|Dh^{n(p)}(p)\|
$$
Clearly $K>0$.
Let $p\in S$ and $L_i: T_{f^i(p)}M\to T_{f^{i+1}(p)}M$ be a sequence of linear isomorphisms
with $\|L_i-Df(f^i(p))\|\leq \epsilon$.
Put $g'=f$ and $x_i=f^i(p)$ for $0\leq i\leq n(p)-1$.
Clearly $f\in\mathcal{W}_0(f)$ so, by the Franks's lemma,
there is $g\in\mathcal{W}(f)=\mathcal{U}$ with
$g(x)=f(x)$
in $\{x_0,\cdots ,x_{n(p)-1}\}$ and
$Dg(x_i)=L_i$ for every $i=0,\cdots ,n(p)-1$.
It follows that $g^i(p)=x_i$ for $i=0,\cdots ,n(p)-1$
thus
$$
Dg^{n(p)}(p)=\prod_{i=0}^{n(p)-1}Dg(g^i(p))=\prod_{i=0}^{n(p)-1}Dg(x_i)
=\prod_{i=0}^{n(p)-1}L_i,
$$
whence
$$
\left\|\displaystyle\prod_{i=0}^{n(p)-1}L_i\right\|=\|Dg^{n(p)}(p)\|\leq
\sup_{(h,p)\in \mathcal{U}\times S}\|Dh^{n(p)}(p)\|=K
$$
proving the result.
\end{proof}

\begin{proof}[Proof of Corollary \ref{cofin}]
Apply Theorem \ref{thA} and Lemma \ref{proof-cofin}.
\end{proof}

Next we observe that if $f\in\diff^1(M)$ every $S\subset\per(f)$ comes equipped
with a natural map $p\in S\mapsto n_{p,f}\in \mathbb{N}^+$ which will be referred to as
the {\em period map}. This motivates the following auxiliary definition.

\begin{defi}
A set of periodic points is
{\em uniformly bounded at the period} if it is uniformly bounded
with respect to its period map.
\end{defi}

Notice that if $f$ has finitely many periodic points of period
$n$, for all $n\in\mathbb{N}^+$, then all period maps have finite preimages.
From this, Theorem \ref{thA} and the fact that all periodic points are nonwandering
we obtain the following useful corollary:

\begin{clly}
\label{useful}
Let $f$ be a diffeomorphisms of a closed manifold
having finitely many periodic points of period $n$, $\forall n\in\mathbb{N}^+$.
Then, every set of periodic points of $f$ which is uniformly bounded at the period
is finite.
\end{clly}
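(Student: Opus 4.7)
The plan is to reduce the corollary directly to Theorem \ref{thA} by verifying its two hypotheses for the period map. The set $S$ is given as a subset of $\per(f)$, and since every periodic point is nonwandering we automatically have $S\subset\Omega(f)$, which takes care of the nonwandering hypothesis of Theorem \ref{thA}. The only remaining issue is to check that the period map $n\colon S\to\mathbb{N}^+$, defined by $n(p)=n_{p,f}$, has finite preimages.

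For the finite-preimage condition I would argue as follows. Fix $k\in\mathbb{N}^+$. Then
\[
n^{-1}(k)=\{p\in S: n_{p,f}=k\}\subset\{p\in\per(f):n_{p,f}=k\},
\]
and by hypothesis the latter set is finite, since $f$ has only finitely many periodic points of period $k$. Hence $n^{-1}(k)$ is finite, so the period map has finite preimages.

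With both hypotheses in place, I would simply invoke Theorem \ref{thA}: the set $S\subset\Omega(f)$ is uniformly bounded with respect to the period map, which has finite preimages, so $S$ is finite. I do not foresee any obstacle here, because the reduction is formal; everything has already been packaged by the definition of \emph{uniformly bounded at the period} and by Theorem \ref{thA} itself. The only small thing to remark is that the hypothesis on finitely many periodic points of each period is used precisely (and only) to ensure finiteness of the preimages of the period map.
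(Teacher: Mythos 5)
Your proof is correct and coincides with the paper's own argument: the period map has finite preimages by the finiteness hypothesis, periodic points are nonwandering, and Theorem \ref{thA} applies directly.
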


This corollary motivates the search of sufficient conditions for a given
set of periodic points to be uniformly bounded at the period.
In the sequel we put a condition based on the following
definition inspired by \cite{M}.
Given a linear operator $A:V\to V$ we denote by
$$
\spec(A)=\max\{|\lambda|:\lambda \mbox{ is an eigenvalue of }
A\}
$$
its the spectral radius.

\begin{defi}
 \label{def2}
We say that $S\subset \per(f)$  is
{\em spectrally uniformly bounded at the period} if there are positive constants $\epsilon,B$ such that
$$
\spec\left(\prod_{i=0}^{n_p-1}L_i\right)<B
$$
for all $p\in S$ and all sequence
of linear maps $L_i: T_{f^i(p)}M\to T_{f^{i+1}(p)}M$ with
$\|L_i-Df(f^i(p))\|\leq \epsilon$ for $0\leq i\leq n_p-1$.
\end{defi}

Since $\spec(A)\leq \|A\|$
one has that every set of periodic points which is uniformly bounded at the period is also spectrally uniformly
bounded at the period.
The converse is true by the following lemma.

\begin{lemma}
\label{c1}
Every set of periodic points which is spectrally uniformly bounded at the period is
uniformly bounded at the period.
\end{lemma}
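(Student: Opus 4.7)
The plan is a Franks-style perturbation argument that converts the spectral hypothesis into the desired norm bound. Fix $\epsilon, B > 0$ witnessing that $S$ is spectrally uniformly bounded at the period, choose $\epsilon_1 \in (0, \epsilon)$ (for instance $\epsilon_1 = \epsilon/2$), and set $\delta = (\epsilon - \epsilon_1)/(\|f\| + \epsilon_1)$. For any $p \in S$ and any sequence $L_i : T_{f^i(p)}M \to T_{f^{i+1}(p)}M$ with $\|L_i - Df(f^i(p))\| \leq \epsilon_1$, the product $A = \prod_{i=0}^{n_p - 1} L_i$ is a well-defined endomorphism of $T_p M$ (since $p$ has period $n_p$). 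For any linear map $R: T_p M \to T_p M$ with $\|R - I\| \leq \delta$, replacing $L_{n_p - 1}$ by $R L_{n_p - 1}$ yields a new sequence $L'_i$ whose entries still satisfy $\|L'_i - Df(f^i(p))\| \leq \epsilon$ (by the choice of $\delta$), with product $RA$. The spectral hypothesis therefore gives $\spec(RA) < B$ for every such $R$.

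The heart of the proof is the following linear-algebra claim: for every $\delta > 0$ and every $d \in \mathbb{N}^+$ there exists $c = c(\delta, d) > 0$ such that for every linear operator $A$ on a $d$-dimensional inner-product space $V$ one can find $R$ with $\|R - I\| \leq \delta$ and $\spec(RA) \geq c \|A\|$. Granting this, combining with the previous paragraph yields $c \|A\| \leq \spec(RA) < B$, so $\|A\| \leq B/c$. Setting $K = B/c$, and observing that this applies to all $p \in S$ and all $\epsilon_1$-perturbations $L_i$, we conclude that $S$ is uniformly bounded at the period with constants $\epsilon_1$ and $K$.

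To establish the linear-algebra claim, pick a unit vector $v \in V$ with $\|Av\| = \|A\|$ and set $w = Av/\|A\|$; the idea is to produce $R$ with $\|R - I\| \leq \delta$ making $v$ an eigenvector of $RA$ with eigenvalue of order $\|A\|$. When $w$ is $\delta$-close to $v$, a rank-one modification of the form $R = I + (v - w)\langle \cdot, w \rangle$ achieves $\spec(RA) = \|A\|$ directly. The main obstacle is the case where $w$ is far from $v$: then no small $R$ can align $w$ to $v$ via a single rank-one perturbation, and one must either descend to a proper invariant subspace (of a nearby rank-one perturbation of $A$) and apply the claim inductively on $\dim V$, or invoke an exterior-power argument in the spirit of \cite{M}. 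The delicate point is carrying an explicit positive constant $c(\delta, d)$, of order $(\delta/d)^d$ or similar, through the induction.
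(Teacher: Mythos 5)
Your overall strategy is sound and runs parallel to the paper's: both arguments fix an $\epsilon_1 < \epsilon$, perturb exactly one of the factors $L_i$ by a small multiplicative factor that keeps the sequence within the $\epsilon$-window, and use the spectral hypothesis to trap $\|\prod L_i\|$. Your bookkeeping with $\delta=(\epsilon-\epsilon_1)/(\|f\|+\epsilon_1)$ is correct. The problem lies in your central linear-algebra claim: you assert that for every $\delta>0$ and $d$ there is $c(\delta,d)>0$ such that every $A$ on a $d$-dimensional space admits $R$ with $\|R-I\|\le\delta$ and $\spec(RA)\ge c\|A\|$. The claim is in fact true, but your proposed proof of it has a genuine gap. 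The rank-one alignment $R=I+(v-w)\langle\cdot,w\rangle$ only works when $w=Av/\|A\|$ is $\delta$-close to $v$, and you acknowledge but do not carry out the difficult complementary case (e.g.\ when $A$ is a shift, $w\perp v$); the proposed fallback ``descend to an invariant subspace and induct'' or ``use exterior powers'' is not a proof, and it is precisely where the real work lies.

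The paper avoids this entirely with a trace trick taken from Lemma II.4 of \cite{M}. Fix an orthonormal basis and note that some entry $b$ of $A=\prod L_i$ satisfies $|b|\ge\|A\|/\dim(M)$. Choose $E$ with exactly one nonzero entry, of size $4\dim(M)B/K$, positioned so that $\tr(AE)=b\cdot 4\dim(M)B/K$. Because the spectral hypothesis already gives $\spec(A)<B$, one has $|\tr(A)|\le\dim(M)B$, so $|\tr(A(I+E))|\ge 3\dim(M)B$ once $|b|\ge K$, forcing $\spec(A(I+E))>B$. One then absorbs $(I+E)$ into $L_0$ (note: on the right, not the left as you propose, but that is immaterial), checks the resulting sequence is still $\epsilon$-close, and contradicts the spectral bound. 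This resolves in one line what your alignment argument struggles with. You could also use the same trace computation to prove your universal claim directly (splitting into the two cases $\spec(A)\gtrless\delta\|A\|/(2d^2)$), giving $c\sim\delta/d^2$ rather than the $(\delta/d)^d$ you anticipate from induction. Either way, as written your proposal is not complete: the claim needs an actual proof, and the trace argument is the tool to supply it.
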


\begin{proof}
The proof follows as in Lemma II.4 of \cite{M}. We include the details for the sake of completeness.
Denote by $I$ the identity and by $\tr(\cdot)$ the trace operation.

Let $S$ be a subset of periodic points of $f$ which is spectrally uniformly bounded at the period.
Fix $B$ and $\epsilon$ as in Definition \ref{def2} for $S$. Fix also $K>0$ such that
$$
\frac{4(\epsilon+\|f\|)\dim(M)\cdot B}{K}\leq \frac{\epsilon}{2}.
$$
Suppose by contradiction that $S$ is not uniformly bounded at the period.
Then, there are $p\in S$
and linear mappings $L_i: T_{f^i(p)}M\to T_{f^{i+1}(p)}M$ with
$\|L_i-Df(f^i(p))\|\leq \frac{\epsilon}{2}$ for $0\leq i\leq n_p-1$
such that $\prod_{i=0}^{n_p-1}L_i$ has an entry $b$ with $|b|\geq K$.
Taking a suitable $\dim(M)\times \dim(M)$ matrix $A$ with all entries $0$ except one with value
$\frac{4\dim(M)\cdot B}{K}$ we get
$$
\tr\left[\left(
\displaystyle\prod_{i=0}^{n_p-1}L_i\right)\cdot (I+A)\right]
=
b\cdot\frac{4\dim(M)\cdot B}{K}+\tr\left(\displaystyle\prod_{i=0}^{n_p-1}L_i\right).
$$
But Definition \ref{def2} says that
$\spec\left(\prod_{i=0}^{n_p-1}L_i\right)<B$, so,
$$
\left|
\tr\left(
\displaystyle\prod_{i=0}^{n_p-1}L_i\right)
\right|\leq \dim(M)\cdot B
$$
thus
$$
\left|
\tr\left[\left(
\displaystyle\prod_{i=0}^{n_p-1}L_i\right)\cdot(I+A)\right]
\right|\geq |b|\cdot\frac{4\dim(M)\cdot B}{K}- \dim(M)\cdot B
\geq 3\dim(M)\cdot B.
$$
From this we get that
$\left(\prod_{i=0}^{n_p-1}L_i\right)\cdot (I+A)$ has spectral radius bigger than $B$.

Next we define the new sequence of linear maps
$\hat{L}_i: T_{f^i(p)}M\to T_{f^{i+1}(p)}M$
by $\hat{L}_0=L_0(I+A)$ and $\hat{L}_i=L_i$ for $1\leq i\leq n_p-1$.

On the one hand,
$$
\displaystyle\prod_{i=0}^{n_p-1}\hat{L}_i=\left(\displaystyle\prod_{i=0}^{n_p-1}L_i\right)\cdot (I+A)
$$
and so
$\prod_{i=0}^{n_p-1}\hat{L}_i$ has spectral radius bigger than $B$.

On the other hand,
$\|\hat{L}_i-Df(f^i(p))\|\leq \frac{\epsilon}{2}\leq \epsilon$ for $1\leq i\leq n_p-1$ and
$$
\|\hat{L}_0-Df(p)\|=\|L_0(I+A)-L_0\|+\|L_0-Df(p)\|
\leq
$$
$$
\|L_0\|\cdot \frac{4\dim(M)\cdot B}{K}+\frac{\epsilon}{2}
\leq
\frac{4(\epsilon+\|f\|)\dim(M)\cdot B}{K}+\frac{\epsilon}{2}
\leq \frac{\epsilon}{2}+\frac{\epsilon}{2}=\epsilon.
$$
So, by Definition \ref{def2},
$\prod_{i=0}^{n_p-1}\hat{L}_i$ has spectral radius less than or equal to $B$.
This is clearly a contradiction which proves the result.
\end{proof}

\begin{proof}[Proof of Corollary \ref{necessary1}]
Clearly $\per^*(f)\neq\emptyset$ if and only if $\per(f)\neq\sink(f)$.
Then, to prove the result, we only need to prove that
$\{f\in\diff^1(M):\per(f)\neq\sink(f)\}$ is dense in $\diff^1(M)$.
Suppose by contradiction that it is not so. Then,
there are $f\in\diff^1(M)$ and a neighborhood $\mathcal{U}$ of it
such that $\per(h)=\sink(h)$ for all $h\in\mathcal{U}$.
Take $\mathcal{W}(f)=\mathcal{U}$ in the Franks's lemma to obtain $\epsilon$ and $\mathcal{W}_0(f)\subset
\mathcal{W}(f)$.

Suppose for a while that there is $g'\in \mathcal{W}_0(f)$ such that
$\sink(g')$ is not uniformly bounded at the period.
Then, $\sink(g')$ is not spectrally uniformly bounded at the period by Lemma \ref{c1}.
It follows that
there is $p\in \sink(g')$ and a sequence of linear isomorphisms
$L_i: T_{x_i}M\to T_{g'(x_i)}M$ with $x_i=(g')^i(p)$
such that $\|L_i-Dg'(x_i)\|\leq \epsilon$ for all $0\leq i\leq n_{p,g'}-1$
and $\spec\left(\prod_{i=0}^{n_{p,g'}-1}L_i\right)>1$.
Then, by the Franks's lemma,
then there is $g\in \mathcal{U}$ such that $g(x)=g'(x)$
in $\{x_0,\cdots ,x_{n_{p,g'}-1}\}$ and
$Dg(x_i)=L_i$ for every $i=0,\cdots ,n_{p,g'}-1$.
Then, $p\in \per(g)$, $n_{p,g}=n_{p, g'}$ and $Dg^{n_{p,g}}(p)=\prod_{i=0}^{n_{p,g'}-1} L_i$.
So, $\spec(Dg^{n_{p,g}}(p))=\spec\left(\prod_{i=0}^{n_{p,g'}-1}L_i\right)>1$
thus $p\not\in\sink(g)$ whence $\per(g)\neq\sink(g)$ contradicting $g\in \mathcal{U}$.
Therefore, $\sink(g')$ is uniformly bounded at the period
for all $g'\in \mathcal{W}_0(f)$.

Now, by the Kupka-Smale and general density theorems \cite{pt}, \cite{p}, there is a residual subset
$\mathcal{U}'\subset \mathcal{W}_0(f)$ such that
every periodic point of $g' \in\mathcal{U}'$
is hyperbolic (thus with finitely many ones of period
$n$, for all $n\in\mathbb{N}^+$), the invariant manifolds of these periodic points are in general position
and $\per(g')$ is dense in $\Omega(g')$.
Since we also have that $\sink(g')$ is uniformly bounded at the period
we conclude that $\sink(g')$ is finite, by Corollary \ref{useful}, whence $\per(g')$ is finite for $g'\in\mathcal{U}'$.
We conclude that every $g'\in\mathcal{U}'$ is Morse-Smale.
As every Morse-Smale diffeomorphism has at least one source we obtain a contradiction
which proves the result.
\end{proof}

To prove Corollary \ref{M-S} we need the following lemma.

\begin{lemma}
 \label{appli1}
If $f\in\diff^1(M)$ exhibits a neighborhood $\mathcal{U}$
satisfying (\ref{ms1}), then $\per(f)$ is uniformly bounded at the period.
\end{lemma}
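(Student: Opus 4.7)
The plan is to reduce the claim to spectral uniform boundedness via Lemma \ref{c1} and then realize arbitrary small perturbations of the products $\prod Df(f^i(p))$ as genuine derivatives of nearby diffeomorphisms by the Franks's lemma. Hypothesis (\ref{ms1}) only controls $\per^*(h)$, so the key observation is that if the perturbation turns $p$ into a sink, the spectral radius is automatically less than $1$; otherwise (\ref{ms1}) gives the required bound on the norm, hence on the spectral radius.

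More precisely, I would first apply the Franks's lemma to $\mathcal{W}(f)=\mathcal{U}$ to obtain the constant $\epsilon>0$ and the neighborhood $\mathcal{W}_0(f)\subset\mathcal{U}$. Set
$$
C=\sup_{(h,q)\in\mathcal{U}\times\per^*(h)}\|Dh^{n_{q,h}}(q)\|<\infty
$$
and define $B=\max\{C,1\}+1$. Then I would show $\per(f)$ is spectrally uniformly bounded at the period with these constants $\epsilon$ and $B$: fix $p\in\per(f)$ and a sequence of linear maps $L_i:T_{f^i(p)}M\to T_{f^{i+1}(p)}M$ with $\|L_i-Df(f^i(p))\|\leq\epsilon$ for $0\leq i\leq n_{p,f}-1$. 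Set $x_i=f^i(p)$; these are pairwise distinct by minimality of $n_{p,f}$. Since $f\in\mathcal{W}_0(f)$, the Franks's lemma supplies $g\in\mathcal{W}(f)=\mathcal{U}$ with $g(x_i)=f(x_i)$ at the finite set $\{x_0,\dots,x_{n_{p,f}-1}\}$ and $Dg(x_i)=L_i$ for every $i$. In particular the $f$-orbit of $p$ is the $g$-orbit of $p$, so $p\in\per(g)$ with $n_{p,g}=n_{p,f}=:n_p$, and the chain rule gives
$$
Dg^{n_p}(p)=\prod_{i=0}^{n_p-1}L_i.
$$

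Now split into cases. If $p\in\sink(g)$, every eigenvalue of $Dg^{n_p}(p)$ has modulus strictly less than $1$, so $\spec(\prod_{i=0}^{n_p-1}L_i)<1\leq B$. If instead $p\in\per^*(g)$, then by (\ref{ms1}) and $g\in\mathcal{U}$,
$$
\spec\!\left(\prod_{i=0}^{n_p-1}L_i\right)\leq\left\|\prod_{i=0}^{n_p-1}L_i\right\|=\|Dg^{n_p}(p)\|\leq C<B.
$$
In either case $\spec(\prod_{i=0}^{n_p-1}L_i)<B$, which is exactly Definition \ref{def2} for $S=\per(f)$ with the constants $\epsilon,B$. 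Finally, Lemma \ref{c1} upgrades spectral uniform boundedness at the period to uniform boundedness at the period, finishing the proof.

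The only subtle point I anticipate is keeping track of the period under the perturbation $g$: one must check that $n_{p,g}=n_{p,f}$, which is immediate because $g$ and $f$ coincide along the entire $f$-orbit of $p$ and $n_{p,f}$ is minimal. Everything else is bookkeeping inside the Franks's lemma dichotomy \textbf{sink vs.\ non-sink}, which is precisely what makes (\ref{ms1}) sufficient despite its restriction to $\per^*(h)$.
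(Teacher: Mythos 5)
Your proof is correct and is essentially the same argument as the paper's: both funnel through Lemma \ref{c1} and the Franks's lemma, and both exploit the observation that a perturbation landing in $\sink(g)$ automatically has spectral radius below $1$, so that (\ref{ms1}), which only controls $\per^*(h)$, suffices. The paper phrases it as a proof by contradiction (assume not spectrally uniformly bounded, produce $p,L_i$ with $\spec(\prod L_i)>\max\{1,\sup\}$, force $p\in\per^*(g)$, contradict), whereas you give the direct contrapositive version with the explicit sink/non-sink dichotomy; the content is identical.
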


\begin{proof}
First of all notice that (\ref{ms1}) implies
$$
\sup_{(h,q)\in\mathcal{U}\times\per^*(h)} \spec\left(Dh^{n_{q,h}}(q)\right)<\infty.
$$
Take $\mathcal{W}(f)=\mathcal{U}$ in the Franks's lemma to obtain
$\epsilon$ and $\mathcal{W}_0(f)\subset \mathcal{U}$.

Suppose by contradiction that $\per(f)$ is not uniformly bounded at the period.
Then, $\per(f)$ is not spectrally uniformly bounded at the period by Lemma \ref{c1}. It follows that
there exist $p\in \per(f)$ and a sequence $L_i:T_{f^i(p)}M\to T_{f^{i+1}(p)}M$
such that $\|L_i-Df(f^i(p))\|\leq \epsilon$ for $0\leq i\leq n_p-1$ but
\begin{equation}
\label{ms2}
\spec\left(\displaystyle\prod_{i=0}^{n_p-1} L_i\right)>
\max\left\{1,\sup_{(h,q)\in\mathcal{U}\times\per^*(h)} \spec\left(Dh^{n_{q,h}}(q)\right)\right\}.
\end{equation}
Take $g'=f$ and $x_i=f^i(p)$ for $0\leq i\leq n_p-1$.
Clearly $g'\in\mathcal{W}_0(f)$ and so, by the Franks's lemma,
there is $g\in \mathcal{U}$ with $g=f$ in $\{x_0,\cdots, x_{n_p-1}\}$
such that $Dg(g^i(p))=L_i$.
Then, $p\in \per(g)$, $n_{p,g}=n_p$ and $Dg^{n_{p,g}}(p)=\displaystyle\prod_{i=0}^{n_p-1} L_i$.
Replacing in (\ref{ms2}) we get
\begin{equation}
\label{ms1-spec}
\spec(Dg^{n_{p,g}}(p))> \max\left\{1,\sup_{(h,q)\in\mathcal{U}\times\per^*(h)} \spec\left(Dh^{n_{q,h}}(q)\right)\right\}.
\end{equation}
In particular, $\spec(Dg^{n_{p,g}}(p))>1$ thus $p\in \per^*(g)$.
As $g\in \mathcal{U}$ we get
$$
\spec(Dg^{n_{p,g}}(p))\leq \sup_{(h,q)\in\mathcal{U}\times\per^*(h)} \spec\left(Dh^{n_{q,h}}(q)\right)
$$
contradicting (\ref{ms1-spec}).
\end{proof}

\begin{proof}[Proof of Corollary \ref{M-S}]
Let $f$ be a diffeomorphism of a closed manifold.
If there is a neighborhood $\mathcal{U}$ of $f$ satisfying
(\ref{ms1}), then $\per(f)$ is uniformly
bounded at the period by Lemma \ref{appli1}. If, additionally, $f$ has
finitely many periodic points of period $n$, $\forall n\in\mathbb{N}^+$, then
$\per(f)$ is finite by Corollary \ref{useful}.
\end{proof}

To prove Corollary \ref{li-pli} we will need two short lemmas and the following definition
inspired in \cite{Pl}.

\begin{defi}
\label{coco}
If $f\in\diff^1(M)$ we say that $S\subset \per(f)$ is
{\em uniformly coarse} if there is
$\epsilon>0$ such that
$\prod_{i=0}^{n_p-1}L_i$ has no eigenvalues of modulus $1$
for all $p\in S$ and all sequence of linear isomorphisms
$L_i:T_{f^i(p)}M\to T_{f^{i+1}(p)}M$
with $\|L_i-Df(f^i(p))\|\leq \epsilon$ for all integer
$0\leq i\leq n_p-1$.
\end{defi}

Related to this definition we have the following lemma.

\begin{lemma}
\label{coarse1}
If $f\in\diff^1(M)$ every uniformly coarse set $S\subset \sink(f)$
is uniformly bounded at the period.
\end{lemma}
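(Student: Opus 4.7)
The plan is to invoke Lemma \ref{c1}: it suffices to prove that $S$ is spectrally uniformly bounded at the period, and I will do this with the strong spectral bound $B=1$, using the same $\epsilon>0$ furnished by Definition \ref{coco}.

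Fix $\epsilon>0$ as in the uniformly coarse hypothesis. For each $p\in S$, let $\mathcal{L}(p)$ denote the set of all sequences of linear isomorphisms $(L_0,\dots,L_{n_p-1})$ with $L_i:T_{f^i(p)}M\to T_{f^{i+1}(p)}M$ and $\|L_i-Df(f^i(p))\|\leq\epsilon$. This set is convex, hence path-connected. The map
\[
\Phi(L_0,\dots,L_{n_p-1})=\spec\left(\prod_{i=0}^{n_p-1}L_i\right)
\]
is continuous on $\mathcal{L}(p)$, since the eigenvalues of a linear map depend continuously on its matrix entries (being roots of the characteristic polynomial).

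Now the unperturbed choice $L_i=Df(f^i(p))$ lies in $\mathcal{L}(p)$ and satisfies $\Phi=\spec(Df^{n_p}(p))<1$, because $p\in\sink(f)$. By the uniformly coarse hypothesis, $\Phi$ never takes the value $1$ on $\mathcal{L}(p)$. Hence $\Phi(\mathcal{L}(p))$ is a connected subset of $\mathbb{R}\setminus\{1\}$ meeting $[0,1)$, and so it must lie entirely in $[0,1)$. This gives $\spec\!\left(\prod_{i=0}^{n_p-1}L_i\right)<1$ uniformly over all $p\in S$ and all admissible perturbation sequences, proving spectral uniform boundedness at the period with $B=1$.

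I do not anticipate a substantive obstacle; the only technical ingredient is the continuity of the spectral radius, which is classical. Applying Lemma \ref{c1} then converts this spectral bound into uniform boundedness at the period, completing the proof.
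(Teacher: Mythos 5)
Your argument is correct and takes essentially the same route as the paper: both reduce via Lemma \ref{c1} to a spectral statement, and both exploit continuity of the spectral radius together with connectedness of the admissible perturbation set and the fact that $\spec(Df^{n_p}(p))<1$ while coarseness forbids the value $1$. The paper phrases this contrapositively (assume failure, produce a perturbation with $\spec>1$, run a path back to $Df^{n_p}(p)$ and invoke the intermediate value theorem to get an eigenvalue of modulus $1$); you argue directly by showing the spectral radius stays in $[0,1)$ over all of $\mathcal{L}(p)$. That is a cosmetic, not substantive, difference.

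One technical point you have glossed over: you assert that $\mathcal{L}(p)$, the set of sequences of linear \emph{isomorphisms} $L_i$ with $\|L_i - Df(f^i(p))\|\leq\epsilon$, is convex. As defined it need not be, since a convex combination of invertible maps in the $\epsilon$-ball may be singular when $\epsilon$ is not small. The fix is cheap: replace $\epsilon$ by $\epsilon'=\min\{\epsilon,\, m(f)/2\}$, which still witnesses uniform coarseness; then every linear map $L$ with $\|L-Df(x)\|\leq\epsilon'$ is automatically invertible, so the $\epsilon'$-version of $\mathcal{L}(p)$ is a genuine convex set of isomorphism sequences, and the connectedness argument and the subsequent appeal to Lemma \ref{c1} (whose Definition~\ref{def2} quantifies over all linear maps, not just isomorphisms) go through without further comment. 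The paper's proof has the analogous implicit gap (it asserts a path of isomorphisms within the $\epsilon$-ball without justification), so this is a shared subtlety rather than a flaw in your plan, but since you invoke convexity explicitly you should make the shrinking of $\epsilon$ explicit.
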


\begin{proof}
Take $\epsilon$ as in Definition \ref{coco} for $S$ and
suppose by contradiction that $S$ is not uniformly bounded at the period.
Then, $S$ is
not spectrally uniformly bounded at the period by Lemma \ref{c1}.
Thus, there are $p\in S$ and a sequence of linear isomorphisms
$L_i:T_{f^i(p)}M\to T_{f^{i+1}(p)}M$
with $\|L_i-Df(f^i(p))\|\leq \epsilon$ for all integer
$0\leq i\leq n_p-1$ such that
$\spec\left(\prod_{i=0}^{n_p-1}L_i\right)>1$.
Take a path of linear isomorphisms $L_i^t:T_{f^i(p)}M\to T_{f^{i+1}(p)}M$ for $0\leq t\leq 1$ and
$0\leq i\leq n_p-1$ such that $L_i^0=Df(f^i(p))$, $L^1_i=L_i$
and $\|L^t_i-Df(f^i(p))\|\leq \epsilon$ for all $0\leq t\leq 1$ and
$0\leq i\leq n_p-1$.
Since $p\in\sink(f)$ (thus $\spec(Df^{n_p}(p))<1$) there is
$0< t< 1$ such that
$\prod_{i=0}^{n_p-1}L^t_i$ has an eigenvalue of modulus $1$. But this is a contradiction
since $S$ is uniformly coarse.
\end{proof}

In particular, we obtain the following corollary.

\begin{clly}
\label{coro1}
Let $f$ be a diffeomorphisms of a closed manifold
having finitely many periodic points of period $n$, $\forall n\in\mathbb{N}^+$.
Then, every uniformly coarse set of $f$ in $\sink(f)$ is finite.
\end{clly}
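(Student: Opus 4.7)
The plan is to prove Corollary \ref{coro1} by chaining together Lemma \ref{coarse1} and Corollary \ref{useful}, both of which are already established immediately before the statement. This corollary is essentially a bridge between the uniformly coarse hypothesis (a ``no eigenvalue on the unit circle'' condition stable under small perturbations of the derivative cocycle) and the finiteness conclusion, passing through the intermediate notion of being uniformly bounded at the period.

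Concretely, let $S \subset \sink(f)$ be uniformly coarse. First, I would apply Lemma \ref{coarse1} directly to $S$: since $S$ consists of sinks and is uniformly coarse, the lemma yields that $S$ is uniformly bounded at the period. Second, since $f$ has finitely many periodic points of period $n$ for every $n \in \mathbb{N}^+$, the period map $p \mapsto n_p$ on $S$ has finite preimages, so $S$ is a set of periodic points of $f$ which is uniformly bounded at the period in the sense required by Corollary \ref{useful}. Applying Corollary \ref{useful} then concludes that $S$ is finite.

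Since both ingredients are in hand, there is no genuine obstacle: the only thing to verify is that the hypothesis ``finitely many periodic points of period $n$ for all $n$'' in Corollary \ref{useful} is indeed the one assumed here, which it is verbatim. In other words, Corollary \ref{coro1} is a one-line consequence of the two preceding results, and the proof amounts simply to citing them in sequence.
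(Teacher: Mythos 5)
Your proposal is correct and matches the paper's proof exactly: Lemma \ref{coarse1} upgrades the uniformly coarse hypothesis to ``uniformly bounded at the period,'' and then Corollary \ref{useful} (with the finitely-many-periodic-points-per-period hypothesis) delivers finiteness.
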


\begin{proof}
If $S\subset \sink(f)$ is uniformly coarse, then it is uniformly bounded at the period by Lemma \ref{coarse1}.
Therefore, it is finite by Corollary \ref{useful}.
\end{proof}

Another lemma is as follows.

\begin{lemma}
\label{coarse2}
If $f\in\diff^1(M)$ is a star diffeomorphisms, then $\per(f)$ is uniformly coarse.
\end{lemma}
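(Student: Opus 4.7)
The plan is to proceed by contradiction and exploit Franks's lemma (Lemma \ref{frank}) to convert a failure of uniform coarseness into a non-hyperbolic periodic point of a nearby diffeomorphism, contradicting the star hypothesis.

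Concretely, suppose $f$ is a star diffeomorphism with star neighborhood $\mathcal{U}\subset\diff^1(M)$, so that every periodic point of every $h\in\mathcal{U}$ is hyperbolic. Apply Franks's lemma with $\mathcal{W}(f)=\mathcal{U}$ to obtain $\epsilon>0$ and a neighborhood $\mathcal{W}_0(f)\subset\mathcal{U}$. I would then show that $\per(f)$ is uniformly coarse with this $\epsilon$, that is, satisfies Definition \ref{coco}.

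Assume instead that this fails. Then there exist $p\in\per(f)$ and a sequence of linear isomorphisms $L_i:T_{f^i(p)}M\to T_{f^{i+1}(p)}M$ with $\|L_i-Df(f^i(p))\|\leq\epsilon$ for $0\leq i\leq n_p-1$ such that $\prod_{i=0}^{n_p-1}L_i$ has an eigenvalue of modulus $1$. Set $g'=f\in\mathcal{W}_0(f)$, take the finite set $\{x_0,\dots,x_{n_p-1}\}$ with $x_i=f^i(p)$ (with any small neighborhood $U$ of this finite orbit), and feed the $L_i$'s into Franks's lemma. This produces $g\in\mathcal{U}$ that agrees with $f$ on the orbit of $p$ and satisfies $Dg(x_i)=L_i$ for $0\leq i\leq n_p-1$. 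Then $p$ remains periodic for $g$ with period $n_{p,g}=n_p$, and by the chain rule
\[
Dg^{n_p}(p)=\prod_{i=0}^{n_p-1}Dg(g^i(p))=\prod_{i=0}^{n_p-1}L_i,
\]
which has an eigenvalue of modulus $1$. Hence $p$ is a non-hyperbolic periodic point of $g\in\mathcal{U}$, contradicting the star property of $f$.

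The argument is quite direct and the only thing that requires attention is making sure Franks's lemma is applied correctly: one must take the whole orbit of $p$ as the perturbation set, and verify that the produced $g$ keeps $p$ periodic with the same period (which follows from $g=f$ on the orbit). I do not anticipate a genuine obstacle here; the lemma is essentially a one-line consequence of Franks's lemma once the star hypothesis is unpacked.
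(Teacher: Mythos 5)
Your proof is correct and uses essentially the same argument as the paper: apply Franks's lemma with the star neighborhood to get $\epsilon$, and convert a putative violation of uniform coarseness into a non-hyperbolic periodic point of a nearby diffeomorphism, contradicting the star property. The only cosmetic difference is that you fix $\epsilon$ up front and show it witnesses uniform coarseness, whereas the paper first assumes $\per(f)$ is not uniformly coarse and then invokes Franks's $\epsilon$; logically these are the same contradiction argument.
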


\begin{proof}
Suppose by contradiction that $\per(f)$ is not uniformly coarse.
Since $f$ is star there is a neighborhood $\mathcal{W}(f)$ of $f$
such that every periodic point of every $g\in \mathcal{W}(f)$ is hyperbolic.
For such a neighborhood we take $\mathcal{W}_0(f)$ and $\epsilon$ as in the Frank's lemma.
Since $\per(f)$ is not uniformly coarse
there are $p\in\per(f)$ and a sequence of linear isomorphisms
$L_i: T_{f^i(p)}M\to T_{f^{i+1}(p)}M$
with $\|L_i-Df(f^i(p))\|\leq\epsilon$ for $0\leq i\leq n_p-1$
such that $\prod_{i=0}^{n_p-1}L_i$ has an eigenvalue of modulus $1$.
Take $x_i=f^i(p)$, $0\leq i\leq n_p-1$, and $g'=f$.
Clearly $g'\in \mathcal{W}_0(f)$ thus, by the Franks's lemma,
there is $g\in \mathcal{W}(f)$ with $g=f$ in $\{x_0,\cdots, x_{n_p-1}\}$
such that $Dg(x_i)=L_i$ for all $0\leq i\leq n_p-1$.
It turns out that $p\in\per(g)$, $n_{p,g}=n_p$ and
$Dg^{n_{p,g}}(p)=\prod_{i=0}^{n_p-1}L_i$.
It follows that $p$ is a nonhyperbolic periodic point of $g$ contradicting $g\in\mathcal{W}(f)$.
This ends the proof.
\end{proof}

\begin{proof}[Proof of Corollary \ref{li-pli}]
If $f\in\diff^1(M)$ is star
$\per(f)$ (and so $\sink(f)$) are uniformly coarse
by Lemma \ref{coarse2}.
Since all star diffeomorphisms on compact manifolds
have finitely many periodic points of period $n$, $\forall n$, we conclude that $\sink(f)$
is finite by Corollary \ref{coro1}.
Since the set of sources of $f$ is $\sink(f^{-1})$ and $f^{-1}$ is star (for $f$ is) we obtain that
$f$ has finitely many sources too.
\end{proof}

\begin{proof}[Proof of Corollary \ref{finite-sinks}]
Let $f$ be a $C^1$ generic diffeomorphism of a closed manifold $M$ satisfying
$$
\cl(\sad(f)\cup\sou(f))\cap \cl(\sink(f))=\emptyset.
$$
Then, there is a neighborhood $U$
of $\cl(\sad(f)\cup\sou(f))$ such that
$
U\cap \sink(f)=\emptyset.
$
But $f$ is $C^1$ generic so we can assume that the set-valued map
$h\in\diff^1(M)\mapsto \cl(\sad(h)\cup\sou(h))$ is semicontinous at $f$.
From this we obtain a neighborhood $\mathcal{U}$
of $f$ such that
$
\cl(\sad(h)\cup\sou(h))\subset U
$, $\forall h\in \mathcal{U}$.
We conclude that
\begin{equation}
 \label{lele}
(\sad(h)\cup\sou(h))\cap \sink(f)=\emptyset,
\quad\quad\forall h\in \mathcal{U}.
\end{equation}
We can further assume by the Kupka-Smale theorem \cite{pt}
that every periodic point of $f$ is hyperbolic, so,
there are finitely many ones of period $n$, $\forall n\in\mathbb{N}^+$.

Let us prove that $\sink(f)$ is uniformly coarse.
Indeed, suppose by contradiction that it is not so.
Take $\mathcal{W}(f)=\mathcal{U}$ in the Franks's lemma to obtain
$\mathcal{W}_0(f)$ and $\epsilon$.
Since $\sink(f)$ is not uniformly coarse
we can find $p\in \sink(f)$ together with linear mappings
$L_i: T_{f^i(p)}M\to T_{f^{i+1}(p)}M$
with $\|L_i-Df(f^i(p))\|\leq \epsilon$ for $0\leq i\leq n_p-1$ such that
$\prod_{i=0}^{n_p-1}L_i$ has an eigenvalue of modulus $1$.
Put $g'=f$ and $x_i=f^i(p)$ for $0\leq i\leq n(p)-1$.
Evidently $f\in\mathcal{W}_0(f)$ so, by the Franks's lemma,
there is $g\in\mathcal{W}(f)=\mathcal{U}$ such that
$g(x)=f(x)$
in $\{x_0,\cdots ,x_{n_p-1}\}$ and
$Dg(x_i)=L_i$ for every $i=0,\cdots ,n_p-1$.
It follows that $g^i(p)=x_i$ for $i=0,\cdots ,n_p-1$
thus $p\in\per(g)$, $n_{p,g}=n_p$ and
$Dg^{n(p)}(p)=\prod_{i=0}^{n(p)-1}L_i$.
Since $\prod_{i=0}^{n_p-1}L_i$ has an eigenvalue of modulus $1$
we conclude that $p$ (as a periodic point of $g$) has also an eigenvalue of modulus $1$.
Then, we can assume that
$p\in \sad(g)\cup\sou(g)$ by perturbing a $g$ a bit if necessary.
But $p\in \sink(f)$ so $p\in (\sad(g)\cup\sou(g))\cap \sink(f)$ whence
$(\sad(g)\cup\sou(g))\cap \sink(f)\neq\emptyset$.
Since $g\in\mathcal{U}$ we contradict (\ref{lele}) thus
$\sink(f)$ is uniformly coarse.

Since $f$ has finitely many periodic points of period $n$, $\forall n\in\mathbb{N}^+$,
we conclude that $\sink(f)$ is finite by Corollary \ref{coro1}.
\end{proof}

\end{document}